\documentclass[10 pt, pdftex, reqno]{amsart}
\usepackage[a4paper, margin=1.5in]{geometry}
\usepackage{booktabs}
\usepackage{colortbl}
\usepackage{arydshln}
\usepackage{float}
\usepackage{yfonts}

\usepackage{amsmath,amsthm,amssymb,amscd, graphicx, color}
\usepackage[all]{xy}
\usepackage{centernot}
\usepackage{mathtools}
\usepackage{stmaryrd}
\usepackage[shortlabels]{enumitem}
%theorems and the like
%\swapnumbers
%\usepackage{xpatch}
%\xpatchcmd{\paragraph}{\normalfont}{{\normalfont\bfseries}}{}{}

\theoremstyle{plain}

\newtheorem{theorem}{Theorem}[]
\newtheorem{proposition}[theorem]{Proposition}
\newtheorem{lemma}[theorem]{Lemma}

\theoremstyle{definition}

\newtheorem{definition}[theorem]{Definition}

\newtheorem{notation}[theorem]{Notation}

\newtheorem{remark}[theorem]{Remark}

\theoremstyle{remark}

%operators

%\DeclareMathOperator{\Bir}{Bir}
%\DeclareMathOperator{\Aut}{Aut}

%\DeclareMathOperator{\Mov}{Mov}

%\DeclareMathOperator{\Proj}{Proj}
%\DeclareMathOperator{\Diff}{Diff}

%\DeclareMathOperator{\ISOM}{\underline{Isom}}
%\DeclareMathOperator{\Isom}{Isom}

%\DeclareMathOperator{\Ext}{Ext}

%shortcuts

\newcommand{\oo}{\mathcal{O}}
\newcommand{\QQ}{\mathbb{Q}}
\newcommand{\RR}{\mathbb{R}}
\newcommand{\PP}{\mathbb{P}}

%commands

\begin{document}

%\usepackage[utf8x]{inputenc}
%\usepackage[ansinew]{inputenc}
%\usepackage[cyr]{aeguill}
%\usepackage{ae,aecompl,aeguill}
%\usepackage[french]{babel}
%\usepackage[french]{babel}
%\usepackage[latin1]{inputenc}
%\usepackage[T1]{fontenc}
%-------------------------------------------

\pagestyle{plain}
\bibliographystyle{hplain}

\title{Anticanonical divisors on Fano fourfolds}

\author{Liana Heuberger}
\address{Liana Heuberger, IMJ, Universit\'e Pierre et Marie Curie, 4 Place Jussieu, 75005 Paris, France}

\email{liana.heuberger@imj-prg.fr}

\begin{abstract}
 
Let X be a Fano manifold. A result by Shokurov states that in dimension three the linear system $|-K_X|$ is non empty and a general element $D \in |-K_X|$ is smooth. In dimension four, one can construct Fano varieties $X$ such that every such $D$ is singular, however we show it has at most terminal singularities. We then determine an explicit local expression for these singular points. 

\end{abstract}

\maketitle

\section{Introduction}

A Fano manifold is a projective manifold $X$ with ample anticanonical divisor $-K_X$. In this paper we characterize the singularities of general divisors in the complete linear system $|-K_X|$, in the case where the dimension of $X$ is equal to four. We sometimes refer to these objects as \emph{general elephants}, a terminology introduced by Miles Reid in \cite{YPG}.

One of the central results on the anticanonical system in dimension three is the following:

\begin{theorem}{\cite{Sho}}
\label{shok}
Let $X$ be a smooth Fano threefold. Then the anticanonical system $|-K_X|$ is not empty and a general divisor $D\in|-K_X|$ is a smooth $K3$ surface.
\end{theorem}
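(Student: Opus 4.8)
\emph{Plan of proof.} The statement splits into a soft existence part, a formal ``$\mathrm{smooth}\Rightarrow K3$'' part, and a genuinely hard smoothness part. For non-emptiness, note that $-2K_X$ is ample, so Kodaira vanishing gives $H^i(X,-K_X)=0$ for $i>0$, whence $h^0(X,-K_X)=\chi(X,-K_X)$. The same vanishing gives $H^i(\oo_X)=0$ for $i>0$, so $\chi(\oo_X)=1$, and the degree-three part of Hirzebruch--Riemann--Roch forces $(-K_X)\cdot c_2(X)=24$. Substituting $D=-K_X$ into Riemann--Roch on the threefold $X$ then yields
\[
h^0(X,-K_X)=\tfrac12(-K_X)^3+\tfrac18\,(-K_X)\cdot c_2(X)=\tfrac12(-K_X)^3+3\ \ge\ 4,
\]
so $|-K_X|\neq\varnothing$ and $\dim|-K_X|\ge 1$.

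Next I would record why smoothness suffices. For any $D\in|-K_X|$ we have $-D\sim K_X$, so from $0\to\oo_X(K_X)\to\oo_X\to\oo_D\to0$ and the vanishing $H^2(\oo_X(K_X))\cong H^1(\oo_X)^\vee=0$, $H^1(\oo_X(K_X))\cong H^2(\oo_X)^\vee=0$ one gets $h^0(\oo_D)=1$ (so $D$ is connected) and $H^1(\oo_D)=0$. Adjunction gives $\omega_D\cong\bigl(\omega_X\otimes\oo_X(D)\bigr)\big|_D\cong\oo_D$. Hence every connected reduced member is a Gorenstein surface with trivial dualizing sheaf and $H^1(\oo_D)=0$; if in addition $D$ is smooth, it is by definition a $K3$ surface. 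So the theorem reduces to: a general $D\in|-K_X|$ is smooth.

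For smoothness, the first step is to show that $|-K_X|$ has no fixed component (this uses only the ampleness of $-K_X$ together with the vanishing theorems, controlling the possible effective sub-divisors of $-K_X$); a general $D$ is then smooth away from the base locus $B:=\Bs|-K_X|$ by Bertini, and $\dim B\le 1$. It remains to prove that a general $D$ is smooth along $B$. A useful intermediate output here is that a general $D$ is \emph{normal}: a non-normal locus would be a curve contained in $B$ along which every member is singular, and comparing the multiplicities of general members along such a curve with intersection numbers such as $(-K_X)^3$ contradicts the ampleness of $-K_X$; a normal member is then Gorenstein with $\omega_D\cong\oo_D$, hence has at worst Du Val singularities, and its minimal resolution has trivial canonical class and $h^1(\oo)=0$, i.e.\ is a $K3$ surface. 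The final sharpening from ``normal with Du Val singularities'' to ``smooth'' is obtained by determining the anticanonical base locus $B$ explicitly (it turns out to be empty, a reduced point, or a smooth rational curve in the relevant cases) and, in each case, exhibiting enough members of $|-K_X|$ to see that the general one is smooth at those points as well; a Du Val point of the general member could only occur if the base scheme were non-reduced or too large there, which is excluded by this analysis.

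The main obstacle is exactly this last point: the local structure of $|-K_X|$ along its base locus. Everything preceding it is standard (Riemann--Roch, Kodaira vanishing, adjunction, Bertini, and elementary multiplicity estimates), whereas proving that the \emph{general} anticanonical divisor is smooth — not merely normal with Du Val singularities — is Shokurov's contribution and requires a hands-on study of $B$, carried out either directly or with the help of the classification of Fano threefolds. I would expect the bulk of the work, and of the write-up, to be in organizing that case analysis.
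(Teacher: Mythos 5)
First, a point of comparison: the paper does not prove this statement. It is quoted from Shokurov \cite{Sho} and used as background; the only indication of the argument given in the paper is the remark that if $\Bs|-K_X|\neq\emptyset$ then it is isomorphic to $\PP^1$. So there is no in-paper proof to measure your proposal against, and I can only assess the proposal on its own terms.

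The soft parts of your plan are correct: the Riemann--Roch and Kodaira-vanishing computation giving $h^0(X,-K_X)=\tfrac12(-K_X)^3+3$ is right (your coefficient $\tfrac18$ correctly absorbs the $\chi(\oo_X)$ term via $c_1c_2=24$), and the reduction ``smooth $\Rightarrow$ K3'' via the ideal-sheaf sequence and adjunction is complete. But the entire content of the theorem is smoothness of the general member along $\Bs|-K_X|$, and there you give a description of what a proof would have to do rather than a proof: you do not establish that $|-K_X|$ has no fixed component, you do not determine the base locus (which, per the paper, is a $\PP^1$ when non-empty), and ``exhibiting enough members of $|-K_X|$ to see that the general one is smooth at those points'' is precisely the statement to be proved. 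Two intermediate steps are also flawed as written. (i) A normal Gorenstein surface with $\omega_D\cong\oo_D$ and $H^1(\oo_D)=0$ need not have Du Val singularities: a simple elliptic point such as $x^3+y^3+z^3=0$ is a normal hypersurface singularity with locally trivial dualizing sheaf, so passing from ``normal'' to ``Du Val'' requires an extra input (e.g.\ that the pair $(X,D)$ is canonical, an inversion-of-adjunction type statement), not just triviality of $\omega_D$. (ii) The normality argument by comparing multiplicities along a curve $C\subset\Bs|-K_X|$ with $(-K_X)^3$ yields only an inequality like $(-K_X)^3\geq 4$, which is no contradiction. So the architecture is reasonable, but the key idea --- Shokurov's analysis of the anticanonical base locus --- is missing, and the proposal does not constitute a proof.
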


A key observation in the proof of this theorem is that if $\mathrm{Bs}|-K_X|\neq \emptyset$, then it is isomorphic to $\PP^1$. The result was a fundamental step in the classification of Fano threefolds of Picard rank one, which, together with the work of Mori and Mukai in \cite{MorMuk}, provided a complete classification of Fano manifolds in dimension three. 

The methods used in proving Threorem \ref{shok} are not generalizable in higher dimensions since they rely on the geometry of $K3$ surfaces. Moreover, its statement does not hold in its current form if $X$ is a fourfold, as shown in \cite[Ex. 2.12]{HV11}, an example where each general $D\in |-K_X|$ is not even $\QQ$-factorial. A correct generalization would be that in which the type of singularities appearing on such a threefold $D$ become smooth if $D$ is a surface.

In this paper we prove the following result:

\begin{theorem}
\label{thm 1}
Let $X$ be a four-dimensional Fano manifold and let $D\in |-K_X|$ be a general divisor. Then $D$ has at most terminal singularities.  
\end{theorem}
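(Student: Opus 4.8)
The plan is to reduce everything to a local analytic statement at the base locus $B:=\Bs|-K_X|$. First I would note that $|-K_X|\neq\emptyset$ — classical for Fano manifolds, and for fourfolds a consequence of Kawamata–Viehweg vanishing together with Riemann–Roch — and that $|-K_X|$ has no fixed component; then for general $D\in|-K_X|$ Bertini's theorem (we are in characteristic zero) shows $D$ is reduced, irreducible, and smooth on $X\setminus B$, so that $\Sing D\subseteq B$. Since $D$ is a Cartier divisor on the smooth variety $X$ it is Gorenstein and Cohen–Macaulay, and adjunction gives $\omega_D\cong\omega_X(D)|_D\cong\mathcal{O}_D$; thus $K_D$ is trivial and Cartier, and $D$ is terminal if and only if at every point $x\in B$ the local equation of the general $D$ defines an isolated terminal (Gorenstein, index-one) hypersurface singularity. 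So it suffices to produce and recognise that local equation.

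To set up the computation of discrepancies, let $\pi\colon\widetilde X\to X$ be a log resolution of the base scheme $\mathcal B$ of $|-K_X|$, and write $K_{\widetilde X}=\pi^*K_X+\sum a_iE_i$ with all $a_i\geq1$ (as $X$ is smooth) and $\pi^*(-K_X)=\widetilde M+\sum r_iE_i$ with $|\widetilde M|$ base-point-free and $\sum r_iE_i\geq0$ the fixed part, supported on $\pi$-exceptional divisors. A general $\widetilde D\in|\widetilde M|$ is smooth, $\pi_*\widetilde D$ is a general $D\in|-K_X|$, and $\pi|_{\widetilde D}\colon\widetilde D\to D$ is a resolution. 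Restricting the identity $K_{\widetilde X}+\widetilde D=\sum(a_i-r_i)E_i$ to $\widetilde D$ and using $K_D=0$ gives $K_{\widetilde D}=\sum(a_i-r_i)(E_i\cap\widetilde D)$, so $D$ is terminal exactly when $a_i>r_i$ for every $E_i$ meeting $\widetilde D$ in a $\pi|_{\widetilde D}$-exceptional divisor. In other words I must bound the multiplicities $r_i=\mult_{E_i}\mathcal B$ of the anticanonical base scheme strictly below the discrepancies $a_i$, the dangerous situation being a point $x\in B$ at which $\mathcal B$ is very singular. Here I would invoke the structural results on anticanonical base loci of Fano fourfolds of H\"oring–Voisin (\cite{HV11}) to control $\dim B$ and the generic behaviour of $\mathcal B$, and use the ampleness of $-K_X$ — via vanishing theorems, to produce enough anticanonical sections through $x$ — to bound $\mult_x\mathcal B$; the expected conclusion is that for general $D$ the divisor $D$ is smooth along $B$ away from finitely many points, and that at each such point $\mult_xD\leq2$.

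It then remains to classify the singularity of the general $D$ at each of these finitely many points. In suitable local coordinates it has an equation $q_2+q_{\geq3}$, with $q_2$ a general member of the degree-$2$ part of the base ideal of $\mathcal B$ at $x$ and $q_{\geq3}$ general of higher order. If $\rk q_2\geq3$ the singularity is compound Du Val of type $cA_n$ — an ordinary double point when $\rk q_2=4$ — hence isolated and terminal; if the geometry of $\mathcal B$ forces $\rk q_2\leq2$, I would feed in the general higher-order terms and show the singularity is still (isolated) compound Du Val, of normal form such as $xy+z^k+w^k=0$ or $x^2+y^2+z^2+w^k=0$, again terminal. These normal forms are the ``explicit local expression'' of the statement. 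I expect the main obstacle to be: (a) pinning down the scheme structure of $\mathcal B$ precisely enough to bound $\mult_xD$ and to see that $\Sing D$ is finite; and (b) — since terminality (as opposed to canonicity) of a hypersurface singularity is not detected by the multiplier ideal or the log canonical threshold of $\mathcal B$ — using the genericity of $D$ in an essential way to rule out degenerate jets producing non-terminal models such as $x^2+y^4+z^4+w^4=0$.
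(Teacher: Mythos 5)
Your adjunction set-up --- writing $K_{\widetilde D}=\sum(a_i-r_i)(E_i\cap\widetilde D)$ and reducing terminality to $a_i>r_i$ --- is exactly the paper's starting point, but the heart of the proof is missing: you never produce an actual bound relating the $r_i$ to the $a_i$. The paper's key input is Proposition \ref{multideal}: $\mathcal{J}(c\,|-K_X|)=\oo_X$ for every $c<2$, equivalently $a_i+1\geq 2r_i$ for \emph{every} divisor over $X$. This is proved by taking a minimal log canonical centre $C$ of $\left(X,c_0\frac{D_1+D_2}{2}\right)$, applying Fujino's extension theorem to get a surjection $H^0(X,\oo_X(-K_X))\twoheadrightarrow H^0(C,\oo_C(-K_X))$, and contradicting $C\subset\Bs|-K_X|$ via Kawamata subadjunction and nonvanishing. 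Your proposed substitute --- ``use vanishing theorems to bound $\mult_x\mathcal B$'' and then classify jets --- is not carried out, and your own caveat (b) concedes the resulting hole: knowing that $\Sing D$ is finite with $\mult_xD\leq 2$ does not exclude canonical non-terminal double points such as $x^2+y^4+z^4+w^4=0$. Ironically, you dismiss the multiplier ideal of the linear system as unable to detect terminality, whereas it is precisely the tool that closes the gap here: $a_i+1\geq 2r_i$ forces $a_i-r_i\geq r_i-1\geq 0$ with equality only when $a_i=r_i=1$, and it also yields $\mult_xD\leq 2$ for free (blow up $x$: $a=3$, $r=\mult_xD$, so $4\geq 2\mult_xD$).

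Even granting the inequality, the borderline case $a_i=r_i=1$ is not resolved by your criterion, and the paper needs a separate geometric observation: $a_i=1$ forces the centre $\mu(E_i)$ to have codimension two in $X$, so $E_i\cap D'$ is not exceptional for $D'\to D$ and does not enter the discrepancy computation at all. Finally, note that your route to Theorem \ref{thm 1} passes through the normal forms of Theorem \ref{thm 2}; this inverts the paper's logic. Those normal forms are obtained only after a lengthy case-by-case elimination (fixed versus moving singularities, ranks of the Hessian) in which each bad case is killed by an explicit tower of blow-ups violating $a_i+1\geq 2r_i$ --- so they cannot be invoked before that inequality is established, and they are not needed for terminality in any case.
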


Theorem \ref{thm 1} is indeed the natural generalization of Theorem \ref{shok} in dimension four, and it relies on and improves the following existing results concerning the geometry of a general elephant:

\begin{proposition}{\cite[Thm.1.7]{HV11}\cite[Thm 5.2]{Kawamata1}}
\label{known}
Let $X$ be a four-dimensional Fano manifold and $D\in |-K_X|$ be a general divisor. We have the following:
\begin{enumerate}[1)]
\item $h^0(X,-K_X)\geq 2$.
\item $D$ is irreducible. In particular, this implies that $\textup{Bs}|-K_X|$ is at most a surface.
\item \label{canonical}  $D$ has at most isolated canonical singularities.
\end{enumerate}
\end{proposition}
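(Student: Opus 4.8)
The plan is to treat the three assertions separately, since each rests on a different set of tools, and I would order them exactly as stated because (2) uses (1) and (3) uses (2). Throughout I rely only on the ampleness of $-K_X$ and on standard vanishing.

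For (1), I would begin with Kawamata--Viehweg vanishing. Since $-K_X$ is ample we may write $-K_X = K_X \otimes (-2K_X)$ with $-2K_X$ ample, so $H^i(X,-K_X) = 0$ for all $i \geq 1$; the same argument with the trivial twist gives $H^i(X,\oo_X)=0$ for $i\geq 1$, hence $\chi(\oo_X)=1$. Thus $h^0(X,-K_X)=\chi(X,-K_X)$, and the claim reduces to the purely numerical inequality $\chi(X,-K_X)\geq 2$. I would extract this from Hirzebruch--Riemann--Roch on the fourfold, expanding $\chi(X,-K_X)-\chi(\oo_X)$ in terms of powers of $-K_X$ and the Chern classes $c_i(X)$. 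The remaining point is the positivity of the correction: here the essential inputs are that $-K_X$ is ample (so all its self-intersections, such as $(-K_X)^4$, are strictly positive) together with the pseudoeffectivity of $c_2(X)$ for rationally connected, hence Fano, manifolds, which forces mixed terms like $(-K_X)^2\cdot c_2(X)$ to be nonnegative. Combining these gives $\chi(X,-K_X)\geq 2$.

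For (2), once (1) supplies $\dim|-K_X|\geq 1$ we have at least a pencil. I would first check that $|-K_X|$ has no fixed component: a fixed divisor $F$ would yield a nontrivial effective decomposition of the ample class $-K_X$ with moving part $-K_X-F$, which I would rule out by intersecting against a suitable ample-positive class. Having no fixed part, the general member is irreducible by the Bertini irreducibility theorem unless $|-K_X|$ is composed with a pencil, i.e.\ the map $\phi_{|-K_X|}$ factors through a curve; I would exclude this using ampleness, since members of a system composed with a pencil share a large common component incompatible with $(-K_X)^4>0$ and the connectedness of anticanonical divisors (which itself follows from the vanishing above). The base-locus claim is then a dimension count: a linear system without fixed part has base locus of $\codim \geq 2$, so $\Bs|-K_X|$ has dimension at most $2$ inside the fourfold $X$, i.e.\ it is at most a surface.

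For (3), the deepest step, I would follow Kawamata. Away from $\Bs|-K_X|$ the general $D$ is smooth by Bertini, so $\Sing D \subseteq \Bs|-K_X|$, which by (2) is at most a surface. Canonicity comes from adjunction combined with the theory of singularities of pairs: since $K_D = (K_X+D)|_D = \oo_D$, the divisor $D$ is a (possibly singular) Calabi--Yau threefold, and one shows through inversion of adjunction that the general member of a linear system inherits canonical singularities from the good behaviour of the pair $(X,D)$. \textbf{The main obstacle is isolatedness.} Canonical Gorenstein threefold singularities need not be isolated, so the $0$-dimensionality of $\Sing D$ cannot follow from canonicity alone; I would instead estimate the dimension of the locus where a general member fails to be smooth by a multiplicity and genericity analysis along $\Bs|-K_X|$, arguing that the generic point of any positive-dimensional component of the base locus imposes enough conditions for the general $D$ to be smooth there, leaving only finitely many singular points. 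This coupling of the explicit local structure of the base locus with the vanishing theorems behind inversion of adjunction is the step I expect to require the most care.
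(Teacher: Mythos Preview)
The paper does not supply its own proof of this proposition: it is quoted as background from \cite{HV11} and \cite{Kawamata1}, so there is nothing in the text to compare your argument against line by line. I can only assess your sketch on its own terms.

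Your outline for (1) is essentially the right one (Kodaira vanishing reduces to $\chi$, then Riemann--Roch on the fourfold), though in practice the inequality $\chi(-K_X)\geq 2$ comes out of the explicit expansion rather than from a soft ``pseudoeffectivity of $c_2$'' remark. Your treatment of (2), however, has a genuine gap. The step ``a fixed divisor $F$ would yield a nontrivial effective decomposition of the ample class $-K_X$\ldots which I would rule out by intersecting against a suitable ample-positive class'' is not an argument: ample divisors decompose effectively all the time (think of $\oo(1,1)=\oo(1,0)+\oo(0,1)$ on $\PP^1\times\PP^1$), and no intersection-theoretic positivity forbids this. The clean route to irreducibility actually runs \emph{through} part (3) rather than preceding it: once one knows a general $D$ has canonical singularities it is normal, and $D$ is connected because $H^1(X,\oo_X(K_X))=0$ forces $H^0(\oo_D)\cong\CC$ via the ideal-sheaf sequence; a connected normal scheme is irreducible. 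So the logical dependence is not quite ``(3) uses (2)'' as you say---rather, irreducibility uses the canonical part of (3), and it is only the \emph{corollary} in (2), that $\Bs|-K_X|$ has dimension at most two, that feeds back into the ``isolated'' claim in (3). For (3) itself your instincts are right: canonicity comes from adjunction\,/\,Kawamata's argument, and isolatedness is the delicate point; the actual proof in \cite{HV11} is more than a genericity count and involves a careful analysis of the pair formed by two general members.
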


Although terminal Gorenstein singularities of threefolds are a well understood class, the statement of Theorem \ref{thm 1} may be further refined: while there only exists a finite number of deformation families of Fano varieties in dimension four, the classification of threefolds with isolated terminal singularities provides an infinite amount of examples.

\begin{theorem}
\label{thm 2}
Let $X$  be a four-dimensional Fano manifold and let $D\in |-K_X|$ be a general divisor. Then the singularities of $D$ are locally analytically given by \[x_1^2+x_2^2+x_3^2+x_4^2=0 \ \text{ or } \ x_1^2+x_2^2+x_3^2+x_4^3=0.\]
\end{theorem}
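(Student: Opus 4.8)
Here is the approach I would take.

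\smallskip

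\emph{Step 1: reduction to a local analytic problem.} Since $X$ is smooth and $D\in|-K_X|$, the divisor $D$ is Cartier, hence Gorenstein, and by Theorem \ref{thm 1} together with Proposition \ref{known}(\ref{canonical}) it has at worst \emph{isolated} terminal singularities. Terminal Gorenstein threefold singularities are exactly the isolated compound Du Val points, so near any $p\in\Sing(D)$ one may identify $(X,p)$ with $(\CC^4,0)$ and write $D=\{f=0\}$ with $\mult_0 f=2$. Writing $f=f_2+f_3+\cdots$ for the homogeneous decomposition, the splitting (Morse with parameters) lemma reduces the whole statement to showing: either $\rk f_2=4$ (which gives $f\sim x_1^2+x_2^2+x_3^2+x_4^2$), or $\rk f_2=3$ and $f$ vanishes to order exactly $3$ along the kernel line of $f_2$ (which gives $f\sim x_1^2+x_2^2+x_3^2+x_4^3$). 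Thus the content is to rule out $\rk f_2\le 2$, and to rule out $\rk f_2=3$ with order $\ge 4$ along the kernel line.

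\smallskip

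\emph{Step 2: genericity at a base point.} By Bertini $\Sing(D)\subset \Bs|-K_X|=:Z$, and $\dim Z\le 2$ by Proposition \ref{known}(2). Working near $p$, let $W\subset\mathfrak{m}_0\subset\oo_{X,0}$ be the image of $H^0(X,-K_X)$ under restriction; then $f$ is a general element of $W$ and the local ideal of $Z$ is $I_{Z,0}=(W)$. Since a general member of $|-K_X|$ is singular at $p$ (here one first argues that the locus swept out by the singular points of the varying $D$ is well behaved, so that $p$ may be taken general on it, and that this forces the linear part of a general element of $W$ to vanish), we get $W\subset\mathfrak{m}_0^2$, hence $I_{Z,0}\subset\mathfrak{m}_0^2$. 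Consequently $f_2$ ranges over the linear space $W_2\subset\mathrm{Sym}^2(\mathfrak{m}_0/\mathfrak{m}_0^2)$ of quadratic parts of elements of $W$; one has $W_2\neq 0$ (else $\mult_0 f\ge 3$), and $\rk f_2$ equals the generic rank of a form in $W_2$. If that generic rank is $4$ we are done by Step 1.

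\smallskip

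\emph{Step 3: excluding the degenerate cases.} This is the heart of the argument. Suppose the generic form in $W_2$ had rank $\le 2$. By the classification of linear systems of quadrics of bounded rank in four variables, $W_2$ is then (after a linear change of coordinates) either contained in $x_1\cdot\{\text{linear forms}\}$ or supported on at most three coordinates, so $I_{Z,0}\subset (x_1)+\mathfrak{m}_0^3$ or $I_{Z,0}\subset(x_1,x_2,x_3)+\mathfrak{m}_0^3$; a careful analysis of the tangent cone of $Z$ at $0$ — using the already known terminality of $D$ and, crucially, the finer description of the base scheme $\Bs|-K_X|$ near $p$ obtained in the proof of Theorem \ref{thm 1} together with the Fano hypothesis — then forces $\codim_0 Z\le 1$, i.e. $\dim Z\ge 3$, contradicting Proposition \ref{known}(2). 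The same mechanism handles the remaining case $\rk f_2=3$ with order $\ge 4$ along the kernel line: there every degree-$\le 3$ leading form of $I_Z$ lies in $(x_1,x_2,x_3)$, and one again contradicts $\dim Z\le 2$. I expect this step to be the main obstacle: the bound $\dim\Bs|-K_X|\le 2$ alone is not quite enough, and one must genuinely combine it with the structural information on $\Bs|-K_X|$ near its most singular points (which is where the Fano condition and Theorem \ref{thm 1} re-enter) and with some bookkeeping of leading forms and tangent cones. A secondary, more technical point is making precise the sense in which the singular point $p$ may be chosen generic, i.e. controlling the family of singular points of $D$ as $D$ moves in $|-K_X|$; this underpins the deduction $W\subset\mathfrak{m}_0^2$ in Step 2.
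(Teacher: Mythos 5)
Your Step 1 matches the paper's reduction: by Theorem \ref{thm 1} the singular points are isolated compound Du Val, and the Morse lemma reduces everything to controlling the rank of the quadratic part (and, in rank three, the order of the residual term). From there, however, the proposal has a genuine gap, and it sits exactly where you place the ``heart of the argument.'' The exclusion of the degenerate cases cannot be run as a dimension count on $Z=\mathrm{Bs}|-K_X|$: the leading quadratic forms of $W$ having low generic rank does not force $\dim Z\geq 3$, because $Z$ is cut out by the full ideal generated by $W$ and the higher-order terms of its elements can (and do) cut $Z$ down to a curve or a point. You concede that ``$\dim\mathrm{Bs}|-K_X|\le 2$ alone is not quite enough'' and defer to unspecified ``structural information,'' but that structural information is precisely the missing proof. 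The tool the paper actually uses, and which never appears in your proposal, is the inequality $a_i+1\geq 2r_i$ of Proposition \ref{multideal} --- i.e.\ the triviality of the multiplier ideals $\mathcal{J}(c\,|-K_X|)$ for $c<2$, proved via the log canonical threshold, Kawamata subadjunction and nonvanishing, and this is where the Fano hypothesis enters quantitatively. Each degenerate rank is then killed by exhibiting an explicit tower of blow-ups along which $r_i$ grows strictly faster than $\tfrac{1}{2}(a_i+1)$; the centers of these blow-ups are found using the Tangency Lemma \ref{tangency lemma}, which shows that when singular points of the varying $D$ accumulate along a curve or surface, all general members of the system become tangent there, so their strict transforms continue to share positive-dimensional loci after blowing up (this is what keeps $r_i$ large). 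Without some substitute for this quantitative input, rank $\leq 2$ and rank $3$ with order $\geq 4$ simply cannot be excluded.

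A secondary omission: the paper must first split into fixed singularities and singularities moving along a curve (after showing, via Proposition \ref{surface} and the reducedness of $\mathrm{Bs}|-K_X|$ along any surface it contains, that they cannot move along a surface), because the blow-up towers and even the surviving normal forms differ in the two cases --- in the moving case only the rank-four node survives, whereas the $x_4^3$ singularity can a priori only occur at a fixed singular point. Your treatment of a single generic $p$ does not capture this dichotomy, and your ``secondary, more technical point'' about controlling the family of singular points is in fact a structural pillar of the argument rather than a technicality.
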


In particular, this result is consistent with the case of \cite[Ex. 2.12]{HV11}, which is a singularity of the first type. We do not yet know of any examples of the second type of singularity on a general elephant.

For the proof we need to consider the geometry of the ambient space together with the fact that the threefolds belong to the same linear system inside it. We begin the analysis of these isolated terminal points by separating the discussion into two cases relative to the geometry of all general elements in $|-K_X|$: fixed and moving singularities. Specifically, either a point $x\in \mathrm{Bs}|-K_X|$ is singular on all the general elephants or there exists a subvariety $V\subset \mathrm{Bs}|-K_X|$ of strictly positive dimension along which these singularities move. We further separate both of these cases according to the rank of the degree two part of a local expression of $D$ and obtain the result in Theorem \ref{thm 2}. 

The fundamental tool in the proofs of both Theorem \ref{thm 1} and \ref{thm 2} is the following inequality:\begin{equation*}\label{introairi} a_i+1\geq 2r_i\end{equation*} where, given a resolution $\mu:X'\to X$ of $\mathrm{Bs}|-K_X|$, we denote by $a_i$ the discrepancies of each exceptional divisor $E_i$ with respect to $(X,0)$ and by $r_i$ the coefficients of $E_i$ in $\mu^*D$ (cf. Notation \ref{notation}). We prove this in Proposition $\ref{multideal}$ with techniques using singularities of pairs and multiplier ideals. The inequality straightforwardly implies the terminality result for all cases except $a_i=r_i=1$, which we show does not in fact occur. It also allows us to systematically eliminate most of the cases leading to the statement of Theorem \ref{thm 2}, providing the necessary liaison between the geometry of $X$ and that of $D$. The general strategy of the proof is to explicitly build a sequence of blow-ups, starting from a center in $X$ containing either the fixed singularities or the subvariety $V$, that ultimately contradicts the above inequality and provides a contradiction. 

\subsection*{Acknowledgements} I would like to thank my Ph.D advisor, Prof. Andreas H\"{o}ring, for suggesting this problem as well as for his patience, constant support and encouragement. 
 
\section{Terminality}
\label{terminality}

\begin{definition}
Let $X$ be a normal, integral scheme and $D=\sum d_iD_i$ an $\RR$-divisor such that $K_X+D$ is $\RR$-Cartier. Let $\mu:X'\to X$ be a birational morphism, with $X'$ normal. If we write \[K_{X'}=\mu^*(K_X+D)+\sum a(E,X,D)E,\] where $E\subset X'$ are distinct prime divisors and $a(E,X,D)\in \RR$, the discrepancy of the pair $(X,D)$ is: \[\mathrm{discrep}(X,D):=\inf\limits_E\, \{ a(E,X,D)\,|\, E \text{ is exceptional with nonempty center on }X\}.\] 

A pair $(X,D)$ is \emph{canonical} if $\mathrm{discrep}(X,D)\geq 0$ and \emph{klt} (Kawamata log terminal) if $\mathrm{discrep}(X,D)>-1$.

We say that $X$ has \emph{terminal} singularities if $\mathrm{discrep}(X,0)>0$.
\end{definition}

\begin{notation}\label{notation}
Let $X$ be a projective manifold. Given a resolution $\mu: X'\rightarrow X$ of the base locus of $|-K_X|$ such that $E= \sum\limits_{i = 1}^m {E_i }$ is its exceptional locus and $D\in |-K_X|$ is a general element, we then write:

\begin{itemize}
\renewcommand{\labelitemi}{$\bullet$}
\item $|\mu^*D|= |D'|+ \sum\limits_{i=1}^m {r_iE_i}$, where $D'$ is the strict transform of D and $r_i > 0$ for $i\in \{1\ldots m\}$,
\item $K_{X'} = \mu^* K_X + \sum\limits_{i=1}^m {a_iE_i}$,  where $a_i > 0$, for all $i\in \{1\ldots m\}$.
\end{itemize}
\end{notation}
These coefficients will be extensively used throughout our proofs.

\begin{remark}
\label{snc}
By Bertini's Theorem, if $|L|$ is a linear system on a smooth variety $X$ and $D_1, D_2\in |L|$ are two general elements, then the divisor $D_1+D_2$ is SNC outside $\mathrm{Bs}|L|$.
\end{remark}

\begin{definition}{\cite[Def. 9.2.10]{Laz}}
Let $|L|$ be a non-empty linear series on a smooth complex variety $X$ and let $\mu:X^\prime \to X$ be a log resolution of $|L|$, with \[\mu^*|L|=|W|+F,\] where $F+\textup{exc}(\mu)$ is a divisor with SNC support and $W\subseteq H^0(X',\oo_{X'}(\mu^*L-F))$. Given a rational number $c>0$, the \emph{multiplier ideal} $\mathcal{J}(c\cdot |L|)$ corresponding to $c$ and $|L|$ is \[\mathcal{J}(c\cdot |L|)=\mathcal{J}(X, c\cdot |L|)=\mu_\ast\oo_{X^\prime}(K_{X^\prime / X}-[c\cdot F]). \]
\end{definition}

\begin{proposition}
\label{multideal}
Let $X$ be a four-dimensional Fano manifold. Then for every $c<2$ we have that \[\mathcal{J}(c\, |-K_X|)=\oo_X.\] In terms of the coefficients in Notation \ref{notation}, this is equivalent to \begin{equation}  \label{airi}
\forall \:  i\in \{1\ldots m\} \: : \: a_i+1\geq 2r_i. \end{equation}

\end{proposition}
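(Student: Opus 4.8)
The plan is to deduce the multiplier ideal vanishing from Nadel-type vanishing, and then unwind it into the stated numerical inequality.

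First I would prove the numerical reformulation. By definition, $\mathcal{J}(c\,|-K_X|)=\mu_*\oo_{X'}(K_{X'/X}-[c\cdot F])$ where $F=\sum r_i E_i$ is the fixed part of $\mu^*|-K_X|$ and $K_{X'/X}=\sum a_i E_i$. Since $X$ is smooth, $\mathcal{J}(c\,|-K_X|)=\oo_X$ is equivalent to the effectivity of the divisor $\sum(a_i-[c\,r_i])E_i$ on $X'$, i.e. to $a_i\ge [c\,r_i]$ for all $i$; taking the supremum over $c<2$ this becomes $a_i\ge \lceil 2r_i\rceil -1$, equivalently $a_i+1\ge 2r_i$ (using that $a_i,r_i$ are integers). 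So the content is the geometric statement $\mathcal{J}(c\,|-K_X|)=\oo_X$ for all $c<2$.

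Next I would establish this vanishing. The natural tool is Nadel vanishing together with the structure of Fano fourfolds. Consider the $\QQ$-divisor $\Delta=c\,D$ for $D\in|-K_X|$ general and $c<2$: then $-K_X-\Delta\equiv(1-c)(-K_X)$ is ample when $c<1$, but we need the range up to $c<2$, so instead I would write $K_X+\Delta\equiv(c-1)(-K_X)$ and note $-(K_X+\Delta)\equiv(1-c)(-K_X)$. For $c\in[1,2)$ this class is anti-nef/non-positive, so plain Nadel vanishing does not directly apply; the key point is dimension: on a fourfold a general $D\in|-K_X|$ has at most isolated canonical singularities by Proposition~\ref{known}\ref{canonical}, hence is canonical, hence the pair $(X,D)$ is canonical (log canonical with the needed strictness), which forces $a_i\ge r_i$ already, and a more refined analysis using that the singularities of $D$ are \emph{isolated} — so the center of each $E_i$ has dimension $\le 1$ in $\Bs|-K_X|\subseteq D$, giving codimension $\ge 3$ — produces the extra $+1$. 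Concretely, if $\operatorname{center}(E_i)$ has codimension $\ge k$ in $X$, then comparing $a_i$ (which is at least $k-1$, the discrepancy of the smooth blow-up bound) against $r_i=\mult_{c_i}D$ and using $\mult_{c_i}D\le\mult_{c_i}(\text{cone over }D)$ bounds, one gets $a_i+1\ge 2r_i$ whenever $r_i\le (k-1)/2+\tfrac12\cdot\text{(something)}$; here $k\ge 3$ because the base locus is at most a surface and the singular locus is isolated inside $D$.

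I expect the main obstacle to be making the last step uniform: one must show $a_i+1\ge 2r_i$ for \emph{every} exceptional divisor $E_i$ of \emph{some fixed} log resolution $\mu$, including divisors extracted at later stages of the resolution whose centers on $X$ may be points but which sit over components of $\Bs|-K_X|$ of positive dimension. The clean way around this is precisely the multiplier ideal statement: multiplier ideals are independent of the chosen log resolution, so it suffices to prove $\mathcal{J}(c\,|-K_X|)=\oo_X$ on one convenient model, and then the inequality for an arbitrary $\mu$ follows formally. Thus I would (i) reduce to checking $\mathcal{J}(c\,|-K_X|)_x=\oo_{X,x}$ locally near each $x\in\Bs|-K_X|$, (ii) use Proposition~\ref{known} to reduce to $x$ an isolated canonical point of the general $D$ with $\Bs|-K_X|$ of dimension $\le 2$, (iii) invoke the classification/adjunction estimate for multiplier ideals of $c\,D$ with $D$ Gorenstein canonical — namely that $\mathcal{J}(c\,D)=\oo_X$ for $c<2$ when $D$ is a reduced divisor with canonical singularities on a smooth fourfold, which is a local computation controlled by the log canonical threshold $\operatorname{lct}(X,D)\ge 1$ and the $3$-fold adjunction $\operatorname{lct}(X,D)=\operatorname{lct}(D,\cdot)$-type bound pushing the threshold of $cD$ above $2$ in codimension $\ge 3$ — and (iv) conclude. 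The delicate verification is step (iii), the local lct bound ensuring the threshold really exceeds $2$ rather than merely $1$; this is where the fourfold hypothesis and the isolatedness of the singularities are both essential.
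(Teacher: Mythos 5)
Your numerical unwinding of $\mathcal{J}(c\,|-K_X|)=\oo_X$ into $a_i+1\geq 2r_i$ is fine, but the core of your argument has two genuine problems. First, the local statement you invoke in step (iii) is false as written: for a nonzero reduced effective divisor $D$ and any $c\geq 1$ one has $\lfloor c\,\mu^*D\rfloor\geq D'$, hence $\mathcal{J}(c\,D)\subseteq \oo_X(-D)\subsetneq\oo_X$ no matter how mild the singularities of $D$ are. The object that must be controlled is not $\mathcal{J}(c\,D)$ but $\mathcal{J}\bigl(\tfrac{c}{2}(D_1+D_2)\bigr)$ for \emph{two} general members, which by \cite[Prop.~9.2.26]{Laz} computes $\mathcal{J}(c\,|-K_X|)$ for $c<2$; there the troublesome coefficient $c$ is split between the two divisors and the issue is concentrated on $\mathrm{Bs}|-K_X|$. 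Second, and more seriously, no purely local argument from ``general member has isolated canonical singularities'' can prove the inequality. Take the local model $|L|=|\mathfrak{m}^3|$ at a point of $\CC^4$: the general member is a cone over a general cubic surface, an isolated canonical hypersurface singularity, yet blowing up the point gives $a=3$ and $r=3$, so $a+1=4<6=2r$. Thus the conclusion of Proposition \ref{multideal} is not a consequence of Proposition \ref{known} plus local lct estimates; the Fano hypothesis must enter globally, and your heuristic ``the threshold exceeds $2$ in codimension $\geq 3$'' cannot be substantiated.

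The paper's proof is global in exactly the place your proposal is local. Assuming $\mathcal{J}(c\,|-K_X|)\subsetneq\oo_X$ for some $c<2$, it passes to the log canonical threshold $c_0<2$ of $\bigl(X,\tfrac{D_1+D_2}{2}\bigr)$ and extracts a \emph{minimal log canonical center} $C$, which by Bertini lies in $\mathrm{Bs}|-K_X|$. Since $-K_X-\bigl(K_X+c_0\tfrac{D_1+D_2}{2}\bigr)\sim(2-c_0)(-K_X)$ is ample (this is where $c<2$ and the Fano condition are used), Fujino's extension theorem gives a surjection $H^0(X,\oo_X(-K_X))\twoheadrightarrow H^0(C,\oo_C(-K_X))$, which must be zero because $C\subseteq\mathrm{Bs}|-K_X|$; Kawamata subadjunction plus the nonvanishing theorem then show the target is nonzero, a contradiction. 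If you want to repair your write-up, you would need to replace steps (ii)--(iv) by an argument of this kind; the local route is a dead end.
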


\begin{proof}
Arguing by contradiction, we suppose there exists a rational number $c<2$ such that $\mathcal{J}(c|-K_X|)\subsetneq \oo_X$. By \cite[Prop.9.2.26]{Laz}, this is equivalent to the fact that the pair $\left(X,c\,\frac{D_1+D_2}{2}\right)$ is not klt for two general divisors $D_1,\, D_2\in |-K_X|$. Take $c_0<c$ to be the log canonical threshold of $\left(X,\frac{D_1+D_2}{2}\right)$, thus producing a properly log canonical pair $\left(X,c_0\frac{D_1+D_2}{2}\right)$ which admits a minimal log canonical center, denoted in what follows by $C$. 

As by Remark \ref{snc} the divisor $D_1+D_2$ has simple normal crossings outside the base locus of $|-K_X|$, the identity map is a log resolution of the pair $\left(X\setminus \mathrm{Bs}|-K_X|, c_0\frac{D_1+D_2}{2}\right)$. Since $\frac{c_0}{2}<1$ we deduce that this pair is klt, which shows that the log canonical center $C$ must be included in the base locus. As the dimension of $\mathrm{Bs}|-K_X|$ is at most two by Proposition \ref{known}, then $C$ is also at most a surface.

Since $c_0<2$, we can apply \cite[Thm. 2.2]{Fundamental} to the pair $\left(X,c_0\frac{D_1+D_2}{2}\right)$ together with the anticanonical divisor. As the divisor $-K_X-(K_X+c_0\frac{D_1+D_2}{2})\sim (2-c_0)(-K_X)$ is ample, we have obtained a surjective map $$H^0(X,\mathcal{O}_X(-K_X))\twoheadrightarrow H^0(C,\oo_C(-K_X)).$$ This map is the zero map since $C$ is contained in $\mathrm{Bs}|-K_X|$, and in order to obtain a contradiction we show that the target is nontrivial.

Indeed, using the minimality of $C$ and the Kawamata Subadjunction formula \cite[Thm.1.2]{FuGo}, there exists an effective $\mathbb{Q}$-divisor $B\subset C$ such that $$\left(K_X+c_0\frac{D_1+D_2}{2}\right)|_C \sim_{\mathbb{Q}} K_C+B$$ and the pair $(C,B)$ is klt. This provides the ingredients to apply Kawamata's Nonvanishing Theorem \cite[Thm.3.1]{Kawamata1} to the pair $(C,B)$ and the divisor $-K_X|_C$. As the divisor $$-K_X|_C-(K_C+B)\sim_{\mathbb{Q}} -K_X|_C-\left(K_X+c_0\frac{D_1+D_2}{2}\right)|_C\sim_{\mathbb{Q}} (2-c_0)(-K_X|_C)$$ is ample, it follows that $H^0(C,\oo_C(-K_X))\neq \emptyset$. 
\end{proof}

This statement immediately implies our first result:

\begin{proof}[Proof of Theorem \ref{thm 1}]

Let $\mu$ be a resolution as in Notation \ref{notation}. The adjunction formula for a general elephant gives us:
\begin{equation}\label{discr}K_{D'}=(\mu|_{D'})^* K_D+\sum\limits_{i=1}^{m}(a_i-r_i)(E_i\cap D') \end{equation}  
 which means that the discrepancy of $(D,0)$ is $\inf\limits_{i}  \{a_i-r_i \ | E_i \text{ is } \mu\text{-exceptional} \}$. As by Proposition \ref{known},\ref{canonical} we already know that this is non-negative, the aim of what follows is to show that the discrepancy of this pair is non-zero. Note that since we have considered a log resolution, the intersection $E_i\cap D'$ is reduced for all $i\in\{1\ldots m\}$.

We further argue that the inequality in condition \eqref{airi} is sufficient in order to obtain terminality.  Indeed, the only case in which this doesn't imply $a_i-r_i>0$ is if both $a_i$ and $r_i$ are equal to one. 

Since the coefficients do not depend on the choice of the resolution, we can assume that we have been working with one in which all blow-ups were made along smooth centers.

\textit{Claim:} We can only obtain $a_i=1$ for a certain $i\in \{ 1\ldots m\}$ if $\textup{codim}_X\mu(E_i)=2$.

Without loss of generality, we can assume we obtained this coefficient by doing the very last blow-up of the resolution, denoted by $\mu_m$: \[\xymatrix@ !{& X'=X_m \ar[r]^{\mu_m}& X_{m-1} \ar[r]^{\psi} &X}\] where $\mu=\psi \circ\mu_m.$ 
We have that
 \[a_m=\lambda+\sum\limits_{i=1}^{m-1}a_i \nu_i\] where $\lambda=\mathrm{codim}_{X_{m-1}}\mu_m(E_m)-1 \geq 1$ and $\nu_i>0$ if and only if $\mu_m(E_m)\subset E_i$. Having $a_m=1$ implies $\lambda=1$ and $\nu_i=0 \ \forall \, i$, which proves the claim since the former condition shows that $\mu_m(E_m)$ is exactly of codimension two in $X_{m-1}$, while the latter signifies that $\psi$ does not contract $\mu_m(E_m)$.

As $\mbox{codim}\psi(E_i)=2$ implies that the intersection $E_i\cap D'$ is not $\mu|_{D'}$-exceptional, this divisor does not contribute to the discrepancy of the pair $(D,0)$ as computed in \eqref{discr}. Together with condition \eqref{airi} this proves that the discrepancy can never be zero, therefore a general elephant $D$ has at most terminal singularities.
\end{proof} 

\section{Separating strict transforms}

We now provide the set-up for the discussion describing the local equations of these singular points.

\begin{notation}
Let $|L|$ be a linear system on a projective manifold $X$ and let $D$ be an effective prime divisor on $X$. We denote by $|L|_D$ the linear system on $D$ given by the image of the restriction morphism: $$H^0(X,\oo_X(L))\rightarrow H^0(D,\oo_D(L)).$$ 
We obtain a linear system on $D$ that will not only be determined by the intrinsic properties of $D$, but which fundamentally depends on the behavior of $|L|$ on $X$. An immediate consequence of this is: \begin{equation} \label{baselocus} \mathrm{Bs}|L|_D=\mathrm{Bs}|L|\cap D.\end{equation}
\end{notation}

\begin{notation}
Given a linear system $|L|$ on a projective variety $X$ and a point $x\in \mathrm{Bs}|L|$,  denote by $|L|_x$ the following closed subset of $|L|$: \begin{align*}|L|_x:=\{D\in |L| \; \vert \; x\in D_{sing}\}. \end{align*}
\end{notation}

\begin{lemma}[Tangency lemma]
\label{tangency lemma}
Let $X$ be a projective manifold and let $|L|$ be a linear system on $X$. Let $x\in X$ be a point in $\mathrm{Bs}|L|$ such that $\textup{codim}_L|L|_x=1$. Then the tangent spaces at $x$ of all divisors in $|L|\setminus |L|_x$ coincide.

\end{lemma}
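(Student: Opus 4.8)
The plan is to translate the codimension hypothesis into a linear-algebra statement about the evaluation of sections and their differentials at the point $x$. First I would pick local analytic (or formal) coordinates $y_1,\dots,y_n$ centered at $x$, so that for each $D\in|L|$ with local equation $f_D$, the fact that $x\in D$ (which holds since $x\in\mathrm{Bs}|L|$) means $f_D(x)=0$, i.e. $f_D$ has no constant term, and the tangent space $T_xD$ is cut out by the linear form $d_xf_D=\sum_{j}\partial_j f_D(x)\,y_j$. Thus $x\in D_{\mathrm{sing}}$ precisely when $d_xf_D=0$, i.e. when all first partials vanish at $x$. Fixing a basis $s_0,\dots,s_N$ of the relevant subspace $W\subseteq H^0(X,\oo_X(L))$ defining $|L|$ and trivializing $L$ near $x$, a general member corresponds to $f=\sum_k \lambda_k f_k$, and the assignment $(\lambda_k)\mapsto d_xf=\sum_k\lambda_k\,d_xf_k$ is a linear map $\Phi\colon W\to T_x^\ast X$. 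Its kernel (projectivized) is exactly $|L|_x$, so the hypothesis $\mathrm{codim}_{|L|}|L|_x=1$ says $\mathrm{rk}\,\Phi=1$.

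The key step is then immediate: if $\mathrm{rk}\,\Phi=1$, the image of $\Phi$ is a single line $\langle\ell\rangle\subset T_x^\ast X$ spanned by some nonzero linear form $\ell$. Hence for every $D\in|L|\setminus|L|_x$ the differential $d_xf_D$ is a nonzero scalar multiple of the fixed form $\ell$, so $T_xD=\{\ell=0\}$ is the same hyperplane for all such $D$. I would also remark that $\Phi$ is well-defined independently of the local trivialization and coordinate choices up to an overall scalar, which does not affect the kernel or the rank, so the statement is intrinsic.

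The main obstacle — really the only subtlety — is making sure the setup is clean: one must note that $|L|$ may be defined by a proper subspace $W\subsetneq H^0(X,\oo_X(L))$ and work with $W$ throughout, and one must be careful that ``general element of $|L|\setminus|L|_x$'' is not needed at all — the conclusion holds for \emph{every} element of $|L|\setminus|L|_x$, since each such element has $d_xf_D\neq 0$ lying in the rank-one image. I would phrase the rank computation as: $\mathrm{codim}_{|L|}|L|_x = \dim|L| - \dim|L|_x = (\dim W - 1) - (\dim\ker\Phi - 1) = \mathrm{rk}\,\Phi$, provided $|L|_x\neq|L|$ (which is implicit, else the codimension is not $1$); the edge case $W=0$ or $\ker\Phi=W$ is excluded by the hypothesis. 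With this bookkeeping in place the lemma follows in two lines.
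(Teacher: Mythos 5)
Your proof is correct, and it rests on the same underlying mechanism as the paper's --- the linearity of $D \mapsto d_xf_D$ --- but it is packaged differently. The paper argues projectively: given $D_1,D_2\in |L|\setminus|L|_x$, the pencil $\langle D_1,D_2\rangle$ is a line in the projective space $|L|$ and must therefore meet the codimension-one closed subset $|L|_x$; a singular member $\lambda f_1+\eta f_2$ of that pencil forces $\lambda\nabla f_1(x)+\eta\nabla f_2(x)=0$, and since both gradients are nonzero they are proportional. You instead observe that, because $x\in\mathrm{Bs}|L|$ kills all constant terms, the locus $|L|_x$ is itself a projective \emph{linear} subspace, namely $\mathbb{P}(\ker\Phi)$ for the differential-evaluation map $\Phi\colon W\to T_x^{*}X$, so the codimension hypothesis reads $\mathrm{rk}\,\Phi=1$ and all nonzero differentials lie on a single line. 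Your version is slightly more structural and makes explicit a fact the paper leaves implicit (the linearity of $|L|_x$, which also justifies calling it a codimension-one subset in the first place); the paper's pencil argument is marginally more economical in that it only needs $|L|_x$ to be closed of codimension one and meets every line, with no choice of basis or trivialization. Both arguments correctly yield the conclusion for \emph{every} $D\notin|L|_x$, not merely a general one, and your dimension bookkeeping identifying the codimension with $\mathrm{rk}\,\Phi$ is accurate.
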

\begin{proof}
Let $D_1, \, D_2 \in |L|\setminus |L|_x$ be two divisors and denote by $P:=\langle D_1, D_2\rangle$ the pencil that they generate. 

As $|L|$ is a projective space, any intersection between a codimension one subset and a line is non-empty, thus there exists a divisor $D\in |L|_x\cap P$. If $f_i$ are local equations of $D_i$ around $x$, there exist two scalars $\lambda, \eta \in \mathbb{C}$ such that $D$ is given by: \begin{align*}f=\lambda f_1+\eta f_2.\end{align*} We differentiate and obtain \begin{align*}\nabla f=\lambda \nabla f_1+\eta \nabla f_2,\end{align*} and as $D$ is singular at $x$, the left hand side vanishes at this point. On the other hand, both $\nabla f_1(x)$ and $\nabla f_2(x)$ are nonzero, thus they must be proportional. This is the same as saying that the tangent spaces of $D_1$ and $D_2$ coincide at the point $x$.
\end{proof}

\begin{remark}
\label{post-tangency}
Throughout this article, we use the lemma above in two particular cases: 
\begin{itemize}
\item there exists a curve $C\subset X$ such that for all $D\in |L|$ there exists a point $x\in D_{sing}\cap C$ and the union of these points is dense in $C$. 
\item there exists a surface $S\subset X$ such that for all $D\in |L|$ there exists a curve $C\subset D_{sing}\cap S$ and the union of all such curves is dense in $S$. 
\end{itemize}
We show that the first case satisfies the hypotheses of Lemma \ref{tangency lemma}. The second case is similar.

Let $|L|^0$ be the Zariski open set in $|L|$ such that for all $D\in |L|^0$ we have $D_{sing}\subset \mathrm{Bs}|L|$. Denote by $\mathcal{U}=\{ (D,x)\,\vert\, D \in |L|^0, \, x\in D_{sing} \}$ the universal family over $|L|^0$ and take $p_1$ and $p_2$ to be the projections on the first and second factor respectively.

Since every $D\in |L|^0$ has an isolated singularity we have that $p_1$ is a finite morphism and the fiber of $p_2$ over a point $x\in C\cap D_{sing}$ is $|L|_x$.  As $p_2$ is dominant, we obtain that \begin{align*}\mathrm{dim}|L|=\mathrm{dim}\,\mathcal{U}=\mathrm{dim}|L|_x+\mathrm{dim}C=\mathrm{dim}|L|_x+1.\end{align*}
\end{remark}

\begin{notation}
In what follows, if \[
  \xymatrix@!{
              & {X} & \ar[l]_{\mu_1}   {X_1} & \ar[l]_{\mu_2}  \ldots &  \ar[l]_{\mu_i}  {X_i}  &  \ar[l]_{\mu_{i+1}} \ldots
                    }
\]
is a sequence of blow-ups, we set $|L_0|=|-K_X|$ and we recursively define $|L_i|$ as the linear system on $X_i$ spanned by the strict transforms of general elements in $|L_{i-1}|$. The index $i$ is a good way to keep track of the level that we are on: as before, exceptional divisors of $\mu_i$ are denoted by $E_i$, while members of $|L_i|$ are denoted by $D_i$, $D_i'$, $\widetilde{D}_i$ etc.

\end{notation}

We begin to examine the local picture around a singular point of a general elephant. In order to obtain the equations in Theorem \ref{thm 2}, we construct a sequence of blow-ups contradicting condition \eqref{airi} until we are only left with the possibilities in the statement. The choice of the first blow-up depends on the nature of the singular point relative to the entire linear system $|-K_X|$. Essentially, there are two possible cases: fixed and moving singularities.

\subsection{Fixed singularities}
\label{fixedsec}

Let $X$ be a four-dimensional Fano manifold and suppose that there exists a point $x\in X$ such that for all $D\in |-K_X|$ we have $x\in D_{sing}$. The point $x$ is called a fixed singularity of the linear system $|-K_X|$.

We organize the singularities of a general elephant according to the rank of the hessian of a local equation, using the Morse Lemma for holomorphic functions:

\begin{lemma}{\cite[Thm.11.1]{AGV}}
\label{Morse}
There exists a neighborhood of a critical point where the rank of the second differential is equal to $k$, in which a holomorphic function in $n$ variables can locally analytically be written as:
$$f(x_1\ldots x_n)=x_1^2+\ldots+ x_k^2+g(x_{k+1},\ldots,x_n),$$ where the second differential of $g$ at zero is equal to zero, that is $g$ is at least of degree three in the variables $x_{k+1},\ldots, x_n$.
\end{lemma}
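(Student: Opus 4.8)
The plan is to prove the holomorphic splitting (Morse) lemma by reducing it to the classical ``completing the square'' argument, carried out in the analytic category with the degenerate coordinates $x_{k+1},\dots,x_n$ retained as passive parameters. First I would normalise: translate so the critical point is the origin, subtract a constant so that $f(0)=0$, and apply Hadamard's lemma on a small polydisc (Taylor's formula with integral remainder, valid for holomorphic functions) to write $f=\sum_{i,j}x_ix_jh_{ij}(x)$ with $h_{ij}=h_{ji}$ holomorphic and $\big(h_{ij}(0)\big)=\tfrac12\,\mathrm{Hess}(f)(0)$ a symmetric matrix of rank $k$; the linear terms are absent because $0$ is critical. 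A $\CC$-linear change of coordinates transforms $\big(h_{ij}(0)\big)$ into $\operatorname{diag}(I_k,0)$ (congruence of symmetric matrices over $\CC$). Writing $x=(s,t)$ with $s=(x_1,\dots,x_k)$, this makes $\nabla^2_s f(0,0)=2I_k$ invertible, while the $s$--$t$ and $t$--$t$ blocks of $\mathrm{Hess}(f)(0)$ vanish.

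Next I would straighten the critical locus of the family $s\mapsto f(s,t)$: by the holomorphic implicit function theorem there is a holomorphic $\sigma(t)$, $\sigma(0)=0$, with $\nabla_s f(\sigma(t),t)\equiv 0$ for $t$ near $0$; differentiating this identity and using that the $s$--$t$ block vanishes gives $D\sigma(0)=0$. After the substitution $s\mapsto s-\sigma(t)$, the function $f(\,\cdot\,,t)$ has a critical point at $s=0$ for every small $t$, nondegenerate there because its $s$-Hessian is $2I_k$ at $t=0$ and hence invertible nearby. The core step is a parametrised holomorphic Morse lemma. Hadamard in the $s$-variables writes $f=f(0,t)+\sum_{1\le i,j\le k}s_is_j\,H_{ij}(s,t)$ with $\big(H_{ij}(0)\big)$ invertible, and I extract one square at a time: at the stage $f=u_1^2+\dots+u_{r-1}^2+\sum_{r\le i,j\le k}s_is_j\,H_{ij}(s,t)$ the leading block $\big(H_{ij}(0)\big)_{r\le i,j\le k}$ still has positive rank, so a $t$-independent linear change in $s_r,\dots,s_k$ makes $H_{rr}(0)\ne 0$; then $u_r:=\sqrt{H_{rr}}\,\big(s_r+\sum_{j>r}s_jH_{rj}/H_{rr}\big)$ (a holomorphic square root on a polydisc where $H_{rr}\ne 0$) together with $u_i:=s_i$ for $i\ne r$ define a local biholomorphism, since the Jacobian is triangular with nonzero diagonal entry $\sqrt{H_{rr}(0)}$; after this step $f=u_1^2+\dots+u_r^2+\sum_{r<i,j\le k}s_is_j\widetilde H_{ij}(s,t)$. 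After $k$ steps no quadratic $s$-terms remain, so in the resulting holomorphic coordinates $f=u_1^2+\dots+u_k^2+g(x_{k+1},\dots,x_n)$ with $g(t):=f(0,t)$.

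It then remains to check that $g$ has vanishing second differential at the origin. Its gradient there vanishes because the origin is still a critical point of $f$, and the rank of the Hessian at a critical point is a biholomorphic invariant: under a coordinate change it transforms by a congruence $J^{T}\big(\mathrm{Hess}\,f\big)(0)\,J$ with $J$ invertible. Hence $k=\rk\,\mathrm{Hess}(f)(0)=k+\rk\,\mathrm{Hess}(g)(0)$, forcing $\mathrm{Hess}(g)(0)=0$; together with $g(0)=0$ and $\nabla g(0)=0$ this says $g$ is of order at least three in $x_{k+1},\dots,x_n$, as claimed.

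I expect the main obstacle to be the parametrised completing-the-square step: running the induction genuinely \emph{holomorphically} and with holomorphic dependence on the passive parameters $t$. This rests on two standard facts that must be invoked carefully in the analytic category — a nonvanishing holomorphic function on a polydisc admits a holomorphic square root, and the holomorphic inverse and implicit function theorems hold — together with the bookkeeping that the finitely many successive shrinkings of the polydisc still leave a nonempty neighbourhood of the origin. Everything else is linear algebra and the invariance of the Hessian rank.
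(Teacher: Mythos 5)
The paper offers no proof of this lemma: it is quoted verbatim as \cite[Thm.~11.1]{AGV} and used as a black box. Your argument is the standard proof of the holomorphic splitting (parametrised Morse) lemma --- Hadamard's lemma, complex congruence to $\operatorname{diag}(I_k,0)$, the implicit function theorem to centre the fibrewise critical point, inductive completion of squares via holomorphic square roots, and invariance of the Hessian rank under congruence to kill the second differential of $g$ --- and it is correct and complete; it is essentially the proof given in the cited reference, so there is nothing to reconcile with the paper itself.
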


Here is the main result of this section:

\begin{theorem}
\label{fixed}
Let $X$ be a four-dimensional Fano manifold and suppose that there exists a point $x\in X$ such that for all $D\in |-K_X|$ we have $x\in D_{sing}$. Then around this point each general elephant is defined by an equation of one of the following two forms: $$x_1^2+x_2^2+x_3^2+x_4^2=0 \text{ or } x_1^2+x_2^2+x_3^2+x_4^3=0.$$
\end{theorem}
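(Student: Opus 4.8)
The plan is to analyze the local equation $f$ of a general elephant $D$ around the fixed singular point $x$ using the Morse Lemma (Lemma~\ref{Morse}), which reduces $f$ to $x_1^2+\dots+x_k^2+g(x_{k+1},\dots,x_4)$ where $k=\rk$ of the Hessian and $g$ has vanishing second differential. Since $x$ is singular on $D$ we have $k\le 3$, and the goal is to show $k=3$ with $g$ of the form $x_4^3$ (up to coordinate change) — in other words, to rule out $k\le 2$ and, when $k=3$, to rule out $g$ vanishing to order $\ge 4$. The strategy dictated by the introduction is to blow up a smooth center through $x$, track the coefficients $a_i$ and $r_i$ in Notation~\ref{notation}, and derive a contradiction with the inequality $a_i+1\ge 2r_i$ from Proposition~\ref{multideal} in every case except the two allowed ones.

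First I would treat the case $k\le 2$. Blowing up the point $x$ itself, the exceptional divisor $E_1\cong\PP^3$ has discrepancy $a_1=\codim_X\{x\}-1=3$, while $r_1=\mult_x D$. If $k\le 1$ then $\mult_x D\ge 2$ forces... actually $a_1+1=4\ge 2r_1$ only fails when $r_1\ge 3$, so a single point blow-up is not enough; instead I would iterate. The key mechanism is that when the multiplicity of the strict transform stays high along a positive-dimensional locus in the exceptional divisor, one keeps blowing up centers of codimension $\ge 2$ (which contribute only $\lambda=\codim-1$, at most $2$, to the discrepancy), so that $a_i$ grows slowly while $r_i$ stays $\ge 2$, eventually violating $a_i+1\ge 2r_i$. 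More precisely, when $k\le 2$ the quadratic part of $f$ has rank $\le 2$, so after blowing up $x$ the strict transform $D_1$ still has a singular locus of dimension $\ge 1$ inside $E_1$ along which the multiplicity is $\ge 2$ (coming from the cubic-and-higher terms $g$ together with the degenerate quadric); blowing up that curve or surface gives a new exceptional divisor with $a_2\le a_1+2$ but $r_2\ge 2$, and repeating drives $2r_i-a_i-1$ upward until it is positive. The same bookkeeping handles $k=3$ with $g$ vanishing to order $\ge 4$: the singular locus of $D_1$ in $E_1\cong\PP^3$ is the single point cut out by $x_1=x_2=x_3=0$ together with the vanishing of the leading form of $g$, and since $\ord g\ge 4$ one gets $r_1=2$ but the strict transform is still singular there with high multiplicity, so one more point blow-up produces $a_2=a_1+3=6$ with $r_2$ large enough — here I would need to compute $r_2$ in terms of $\ord g$ — to contradict the inequality, leaving exactly $g=x_4^3$ (the $A_2$ case) and $k=4$ (the ordinary double point, $A_1$) as the survivors.

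The cleanest organizing principle is: the blow-up of $x$ gives $a_1=3$ and $r_1=\mult_x D$, so $r_1\le 2$ is forced immediately by $\eqref{airi}$, i.e. $\mult_x D=2$ and the quadratic part $q$ of $f$ is nonzero; then $k=\rk q\in\{1,2,3,4\}$. For $k=4$ we are done (first equation). For $k\le 3$, the strict transform $D_1\subset X_1$ is singular exactly along $Z:=\{q=g_{\mathrm{lead}}=0\}\cap E_1\subset \PP^3$ where $g_{\mathrm{lead}}$ is the leading form of $g$; $\dim Z\ge 0$ always, and $\dim Z\ge 1$ precisely when $k\le 3$ and $\ord g\ge 3$ fails to cut $Z$ down — one computes $\dim Z = 3-\rk q$ generically plus the contribution of $g_{\mathrm{lead}}$, so $\dim Z\ge 1 \iff k\le 2$, while $k=3$ gives $\dim Z=0$. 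I would then blow up $Z$ (a point, curve, or surface), compute $a_2$ (equal to $\codim_{X_1}Z$, since $Z\not\subset$ the old exceptional divisor is false — actually $Z\subset E_1$, so $a_2=\codim Z + a_1\cdot(\text{coeff})$; this needs care) and $r_2=\mult$ of $D_1$ along $Z$, and check $\eqref{airi}$ fails unless $(k,\ord g)=(3,3)$ or $k=4$. Iterating finitely many times (the multiplicity strictly drops or the center's codimension is controlled) closes the remaining subcases.

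The main obstacle I expect is the bookkeeping in the case $k=3$: distinguishing $g$ of order exactly $3$ (which must further be shown to reduce to $x_4^3$, not merely have a cubic leading term — but a cubic in one variable $x_4$ is automatically a scalar times $x_4^3$, so after the Morse reduction this is automatic) from $g$ of order $\ge 4$, and carrying out the two-or-three-step blow-up for the latter while keeping precise track of how $r_i$ accumulates. A secondary subtlety is justifying, at each stage, that the center being blown up is smooth (so Notation~\ref{notation} applies and the discrepancy formula $a_{i+1}=\lambda+\sum a_j\nu_j$ from the proof of Theorem~\ref{thm 1} is valid), and that the strict transform's singular locus remains contained in the base locus of $|L_i|$ so that the Tangency Lemma~\ref{tangency lemma} can be invoked to control how the singularity of a \emph{general} member behaves — in particular, to ensure that the local equation we are manipulating really is that of a general $D\in|-K_X|$ and that $x\in\Bs|-K_X|$ forces the relevant forms to vanish. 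The positivity input from Proposition~\ref{multideal} does all the heavy lifting; the work is purely in choosing the right sequence of centers and computing two sequences of integers.
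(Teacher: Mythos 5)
Your framing (Morse normal form, case division by the rank $k$ of the Hessian, contradiction with $a_i+1\ge 2r_i$ via explicit blow-ups) matches the paper, and your observation that the point blow-up alone forces $\mult_xD=2$ via $a_1+1=4\ge 2r_1$ is a clean shortcut for what the paper gets from the classification of terminal singularities. The rank-one and rank-four endpoints are also essentially right: for $k\le 1$ the tangent cone is non-reduced along a surface, so $\mathrm{Bs}|L_1|\cap E_1$ contains a surface, and blowing that surface up gives $a_2=4$, $r_2\ge 3$, a contradiction (this is the paper's Lemma~\ref{Bs-curve}).

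However, your central mechanism for $k=2$ and for $k=3$ with $\operatorname{ord}g\ge 4$ — that $D_1$ has a positive-dimensional singular locus of multiplicity $\ge 2$, or that one more point blow-up at the singular point of $D_1$ yields $r_2=4>\tfrac{a_2+1}{2}$ — has a genuine gap. Compute the chart: for $k=2$ the strict transform $u_1^2+u_2^2+u_4h=0$ is singular only at the finitely many points $\{u_1=u_2=u_4=0,\ g_{\mathrm{lead}}(u_3,1)=0\}$, and for $k=3$ at the single point $[0:0:0:1]$; moreover these points \emph{move} as $D$ varies in the linear system (the paper proves in Step 2 of its rank-three case that two general members are \emph{not} singular at the same point — if they were, the point blow-up would indeed give $a_2=6$, $r_2\ge 4$ and finish). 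Since the coefficient $r_i$ in \eqref{airi} is the multiplicity of the \emph{general} member along the center, no fixed center sees multiplicity $2$: blowing up the curve $C_1$ swept out by the moving singular points gives only $r_2=r_1+1=3$ against $a_2=5$, which does \emph{not} violate $a_2+1\ge 2r_2$. The missing idea is the tangency argument: by Lemma~\ref{tangency lemma} all general members share their tangent space at the generic point of $C_1$, so after blowing up $C_1$ the strict transforms of two general members meet along a common surface $S\simeq\PP(\mathcal{N}^*_{C_1/D})$ inside $E_2$ (contained in $E_1'$ in rank two, not in rank three), and it is the blow-up of \emph{that} surface which finally contradicts \eqref{airi} ($a_3=9$, $r_3=6$ in rank two; $a_3=6$, $r_3=4$ in rank three). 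Rank two additionally splits into the sub-case where $D_1$ genuinely is singular along $C_1$ (where your multiplicity bookkeeping does work: $a_2=5$, $r_2=m+2\ge 4$) and the sub-case where $D_1$ is smooth but tangent to $E_1$ along $C_1$, which is the one requiring the three-step argument. You flag the Tangency Lemma as a "secondary subtlety," but it is in fact the load-bearing step in every case your multiplicity iteration cannot reach.
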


Throughout the proof we repeatedly use the following lemma:

\begin{lemma} \label{Bs-curve} Under the assumptions of Theorem $\ref{fixed}$, let $\mu_1:X_1\rightarrow X$ be the blow-up of $X$ at the point $x$ and $E_1$ its exceptional divisor. Denote by $|L_1|$ the linear system on $X_1$ spanned by all the strict transforms $D_1$ of general divisors $D\in|-K_X|$. Then the intersection $\mathrm{Bs}|L_1| \cap E_1$ is at most a curve.\end{lemma}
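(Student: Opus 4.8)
The plan is to argue by contradiction: suppose $\mathrm{Bs}|L_1| \cap E_1$ contains an irreducible surface $S$. Since $E_1 \cong \PP^3$ is itself a threefold, such an $S$ is a surface inside $\PP^3$, hence an ample divisor on $E_1$; in particular it is not contracted by $\mu_1$ and $\mu_1(S) = \{x\}$ only set-theoretically on $E_1$ but $S$ genuinely fills up a codimension-one locus of $E_1$. The key point I want to exploit is the relation between $|L_1|$ and $|-K_X|$ near $x$ via the Morse Lemma (Lemma \ref{Morse}): writing a general $D \in |-K_X|$ locally as $f = x_1^2 + \cdots + x_k^2 + (\text{higher order})$ with $k$ the rank of the Hessian, the multiplicity of $D$ at $x$ is $2$, so $\mu_1^* D = D_1 + 2E_1$, i.e.\ $r_1 = 2$ in the notation of Notation \ref{notation}, while $a_1 = \mathrm{codim}_X\{x\} - 1 = 3$. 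The strict transform $D_1$ meets $E_1 = \PP^3$ in the projectivized tangent cone of $D$ at $x$, which is the quadric $Q_D = \{x_1^2 + \cdots + x_k^2 = 0\} \subset \PP^3$ (a genuine quadric surface when $k \geq 1$, all of $\PP^3$ only if $k = 0$, which cannot happen for a general elephant with an isolated singularity since $D$ is canonical).

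Now the heart of the argument: $S \subseteq \mathrm{Bs}|L_1| \cap E_1$ means $S$ is contained in $D_1 \cap E_1 = Q_D$ for every general $D$. First I would dispose of the case where $k \leq 3$ for general $D$: then $Q_D$ is an irreducible quadric surface in $\PP^3$ (rank $\geq 2$), and as $D$ varies the tangent cone varies — unless the tangency is forced. Here I would invoke the Tangency Lemma (Lemma \ref{tangency lemma}) together with Remark \ref{post-tangency}: if the singularities were genuinely moving we would be in the moving case, but here $x$ is a \emph{fixed} singularity, so I need a direct argument. The point is that if $S \subset Q_D$ for all general $D$ and $\dim S = 2$, then since $Q_D$ is irreducible of dimension $2$ we must have $S = Q_D$ for all general $D$, forcing all the quadrics $Q_D$ to coincide; this means all general $D$ have the same tangent cone at $x$. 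But then, restricting to a general threefold $D_1 \in |L_1|$ on $X_1$, the surface $S = Q_D \subset D_1$ is in the base locus $\mathrm{Bs}|L_1|_{D_1}$ by \eqref{baselocus}, and I can iterate the argument (blow up $S$, or rather the ambient surface in $X_1$ whose strict transform-intersection is $S$) to keep producing divisorial contributions that push up $r_i$ relative to $a_i$, eventually violating inequality \eqref{airi} $a_i + 1 \geq 2r_i$ from Proposition \ref{multideal}. Concretely, blowing up the smooth curve or surface $Z \subset X$ lying over the locus where $D$ is most tangent and tracking that $\mathrm{mult}$ along it forces $r_2 \geq 2$ while $a_2$ stays small gives the contradiction.

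The main obstacle I anticipate is handling the case $k = 4$, i.e.\ when the Hessian of a general $D$ at $x$ has full rank $4$: then $Q_D = \{x_1^2+x_2^2+x_3^2+x_4^2=0\}$ is a smooth quadric surface $\cong \PP^1 \times \PP^1$, and a priori a $1$-parameter family of these could sweep out all of $E_1$ or force a common curve rather than a common surface — but a \emph{surface} $S$ common to all of them still forces $S = Q_D$, hence all $Q_D$ equal, hence (since a smooth quadric is determined by its equation up to scalar) all general $D$ share the quadratic part of their local equation at $x$. This is exactly the rigidity I need: it says the linear system $|-K_X|$, restricted to the relevant infinitesimal neighborhood of $x$, has its "leading term" fixed, which via the surjectivity/non-vanishing machinery of Proposition \ref{multideal} (applied after the blow-up $\mu_1$, so that $X_1$ is no longer Fano but $-\mu_1^*K_X - E_1 \cdot(\text{something})$ can still be made to work through the subadjunction argument on the center $S \subset E_1$) should be contradicted, because $H^0$ restricted to $S$ would be forced to be both zero (as $S \subset \mathrm{Bs}$) and nonzero (by Kawamata non-vanishing on $S$, which is a smooth quadric with effective anticanonical-type divisor). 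I would therefore structure the proof as: reduce to $S = Q_D$ for all general $D$; deduce the quadratic part of the local equation is constant along the family; build the second blow-up and apply \eqref{airi}, or alternatively re-run the multiplier-ideal argument of Proposition \ref{multideal} on $X_1$ with center $S$, to reach a contradiction; conclude $\dim(\mathrm{Bs}|L_1| \cap E_1) \leq 1$.
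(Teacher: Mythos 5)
Your overall strategy --- blow up the putative surface $S\subset \mathrm{Bs}|L_1|\cap E_1$ and contradict inequality \eqref{airi} --- is the same as the paper's, but the decisive step is missing: you never carry out the discrepancy/multiplicity computation, and the numbers you do assert do not give a contradiction. With $a_1=3$ and $r_1=\mathrm{mult}_xD\geq 2$, blowing up $S$ (codimension two in $X_1$, contained in $E_1$, with $E_1$ smooth along $S$) gives $\mu_2^*E_1=E_1'+E_2$ and hence $a_2=1+a_1=4$; on the divisor side, $\mu_2^*\mu_1^*D=D_2+r_1E_1'+(r_1+m_2)E_2$, where $m_2\geq 1$ is the multiplicity of $D_1$ along $S$ (positive precisely because $S\subset\mathrm{Bs}|L_1|$ forces $S\subset D_1$). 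The whole point is the additivity $r_2=r_1+m_2\geq 3$, so that $2r_2\geq 6>5=a_2+1$. Your statement that the blow-up ``forces $r_2\geq 2$ while $a_2$ stays small'' is not a contradiction: $2\cdot 2=4\leq 5=a_2+1$ is perfectly consistent with \eqref{airi}. Without the observation that the coefficient of $E_2$ in $\mu_2^*\mu_1^*D$ inherits the full coefficient $r_1$ of $E_1$ in addition to $\mathrm{mult}_S D_1$, the argument does not close.

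The long detour through tangent cones is both unnecessary and partly incorrect. Unnecessary, because the only input needed from $S\subset\mathrm{Bs}|L_1|\cap E_1$ is that $S\subset D_1$ for every general $D_1$, i.e.\ $m_2\geq 1$; you do not need $S=Q_D$, nor that all the quadrics $Q_D$ coincide, nor any case division on the rank $k$ of the Hessian. Partly incorrect, because a rank-two quadric $x_1^2+x_2^2=0$ in $\PP^3$ is a union of two planes, not irreducible, so the claim that rank $\geq 2$ makes $Q_D$ irreducible and hence forces $S=Q_D$ fails. The suggestion to re-run the multiplier-ideal and non-vanishing machinery of Proposition \ref{multideal} on $X_1$ (which is no longer Fano) is likewise unneeded and would require separate justification. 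Finally, you should also dispose of the case $\dim(\mathrm{Bs}|L_1|\cap E_1)=3$, which is immediate since a strict transform never contains the exceptional divisor.
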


\begin{proof}
Suppose that $\mathrm{dim}(\mathrm{Bs}|L_1|\cap E_1)=3$. This implies that $E_1$ is a fixed component of each $D_1$, a contradiction since a strict transform doesn't contain the exceptional divisor.

If $\mathrm{Bs}|L_1|\cap E_1$ contains a surface $S$, let $\mu_2:X_2\rightarrow X_1$ be its blow-up and $E_2$ the unique exceptional divisor mapping onto $S$. We compute the discrepancies $a_i$ and the coefficients $r_i$, for $i=1,2$, as introduced in Notation \ref{notation}.

The dimensions of the centers give us that:
\begin{center}
$\begin{array}{c}K_{X_1}=\mu_1^*K_X+3E_1\\
K_{X_2}=\mu_2^*{K_{X_1}}+E_2+F,\end{array}$
\end{center}
where $F$ consists of other exceptional divisors not mapping onto $S$.
As $E_1$ is smooth along $S$, we have that $\mu_2^*E_1=E_1'+E_2$, thus obtaining $a_1=3$ and $a_2=4$.

The computations of the $r_i$ depend on the multiplicity of $x$ on $D$, which we denote by $m_1\geq 2$ since $D$ is singular at $x$. Set $m_2\geq 1$ to be the multiplicity of $D_1$ along $S$, then the coefficients are:
$$\mu_1^* D=D_1+m_1E_1$$
$$\mu_2^*\mu_1^*D=D_2+m_1E_1'+(m_1+m_2)E_2,$$
where $D_2\subset X_2$ is the strict transform of $D_1$. Thus $r_1=m_1$ and $r_2=m_1+m_2$. Clearly \[2r_2=2(m_1+m_2)\geq 2m_2+4\geq 6\] is strictly larger than $a_2+1=5$, hence by condition \eqref{airi} we obtain a contradiction. 
\end{proof}

\begin{definition}
We give three alternative definitions of the same concept:
\begin{itemize}
\item Let $D$ be a divisor on a smooth variety $X$ and take a point $x\in D$. The tangent cone of $D$ at $x$ is the variety $\mathrm{Spec}(\mathrm{gr}_m\oo_{D,x})$, where \[\mathrm{gr}_m\oo_{D,x}= \bigoplus\limits_{i\geq 0} m^i/m^{i+1},\] and $m$ is the maximal ideal of $\oo_{D,x}$

\item If locally we assume that $x$ is the origin and that $D$ is given by the ideal $I$, this is also the variety whose ideal is $\mathrm{in}(I)$, the initial ideal associated to $I$. Geometrically, this corresponds to the union of the tangent lines to $D$ at the point $x$.

\item Let $\mu:X_1\to X$ be a blow-up at a point $x\in X$, let $D_1$ be the strict transform of $D$ and $E$ the exceptional divisor. The tangent cone is then isomorphic to $D_1\cap E$.
\end{itemize}
\end{definition}

As we apply the Morse Lemma, the second interpretation says that the tangent cone is always given by the quadratic terms of a local equation of the general elephant. Combining this with the third interpretation will allow us to derive information on the singularities of blow-ups.

\begin{proof}[Proof of Theorem \ref{fixed}] 
Theorem \ref{thm 1} states that $D$ is terminal, and by the classification of terminal singularities (refer to \cite{Mor}, \cite{YPG} and \cite{Kollar3}) we obtain that the singularity is a point of multiplicity two on $D$. We analyze each of the cases in the Morse Lemma. Namely, we are in one of four situations corresponding to the rank of the hessian of a local expression of $D$:

\medskip

\noindent \textbf{Rank one:} By Lemma \ref{Morse}, a general member of $D\in|-K_X|$ is locally given by:
$$x_1^2+g(x_2,x_3,x_4)=0,$$ where the degree of $g$ is at least equal to three. The tangent cone $D_1\cap E_1$ is singular along the entire surface $[0:x_2:x_3:x_4]$. By Bertini's Theorem all the singularities of general divisors in the linear system $|L_1|_{E_1}$ are contained in $\mathrm{Bs}|L_1|_{E_1}$. Since $\mathrm{Bs}|L_1|_{E_1}=\mathrm{Bs}|L_1|\cap E_1$ by (\ref{baselocus}), this contradicts Lemma \ref{Bs-curve}.

\medskip

\noindent \textbf{Rank two:} In this case, the equation of a general $D\in |-K_X|$ is the following:
$$x_1^2+x_2^2+g(x_3,x_4)=0,$$ where again $g$ is of degree three or higher. The tangent cone is singular, this time along the line $l_{D_1}=[0:0:x_3:x_4]$ for a general $D_1\in |L_1|$. Note that there is no immediate contradiction, since by Lemma \ref{Bs-curve} the intersection $\mathrm{Bs}|L_1|\cap E_1$ can be a curve $C$. By Bertini's Theorem the curve $C$ contains $l_{D_1}$ as an irreducible component. Since there are only finitely many irreducible components of $C$ and an infinite numbers of strict transforms, the tangent cones are in fact singular along the same component $C_1$, that is the line $l_{D_1}$ is independent of the choice of $D$.

First suppose that a general $D_1\in|L_1|$ is singular along $C_1$ and denote its multiplicity by $m\geq2$. Blowing up $X_1$ along $C_1$ and arguing as in the proof of Lemma \ref{Bs-curve} we obtain the coefficients $a_2=5$ and $r_2=m+2$, which contradict condition \eqref{airi}. 

Thus both $D_1$ and $E_1$ are smooth at the generic point of $C_1$ and their intersection is a surface $S$ which is singular along $C_1$. This is precisely the previously mentioned tangent cone. As $S$ is singular and both $J_{E_1}|_{C_1}$ and $J_{D_1}|_{C_1}$ are of maximal rank, we obtain that $$T_{E_1}|_{C_1}=T_{D_1}|_{C_1}, \ \forall \ D_1\in |L_1| \  \textup{general.}$$

All general members $D_1\in |L_1|$ are therefore tangent along $C_1$. 
 
 \medskip
 
 Let $\mu_2:X_2\rightarrow X_1$ be the blow up of $X_1$ along $C_1$, let $E_2$ be its exceptional divisor and set $\mu=\mu_1\circ\mu_2$. We obtain the following coefficients:
\begin{center}
$\begin{array}{c} 
K_{X_2}=\mu^*K_X+3E_1'+5E_2\\
\mu^*D=D_2+2E_1'+3E_2,
\end{array}$
\end{center}
where $D_2$ and $E_1'$ are the strict transforms of $D_1$ and $E_1$ respectively.
Take two distinct general divisors $D_1', D_1''\in |L_1|$. As they are tangent at the generic point of $C_1$, its blow-up $\mu_2$ will not separate their strict transforms  $D_2'$ and $D_2''$. Explicitly, on $X_2$ we have the surfaces \[Q':=D_2'|_{E_2}\simeq \PP(\mathcal{N}^*_{C_1/D_2'}) \  \textup{and} \ Q'':=D_2''|_{E_2}\simeq \PP(\mathcal{N}^*_{C_1/D_2''}),\] which coincide as the normal sheaves are the same by the commutative diagram:
\begin{center}
$\begin{array}[c]{ccccccccc}
  0 &  \rightarrow &  \mathcal{T}_{C_1}  & \rightarrow &  \mathcal{T}_{D_2'}|_{C_1} & \rightarrow & \mathcal{N}_{C_1/{D_2'}} & \rightarrow & 0\\
  && \rotatebox{90}{=} && \rotatebox{90}{=} \\
 0 &  \rightarrow &  \mathcal{T}_{C_1}  & \rightarrow &  \mathcal{T}_{D_2''}|_{C_1} & \rightarrow & \mathcal{N}_{C_1/{D_2''}} & \rightarrow & 0.
\end{array}$
\end{center}
Thus the two strict transforms intersect along a surface $S\subset X_2$. Similarly, using that $T_{E_1}|_{C_1}=T_{D_1'}|_{C_1},$ we have that $S$ is also contained in $E_1'$. If $\mu_3:X_3\to X_2$ is the blow-up of $X_2$ along $S$, from the following computations:  
\begin{center}
$\begin{array}{c}
\mu_3^*E_1=E_1'+E_3, \\
 \mu_3^*E_2=E_2'+E_3, \\
 \mu_3^*D_2=D_3+E_3,
 \end{array}$
\end{center}
 we obtain $a_3=9$ and $r_3=6$, where $D_3$ is the strict transform of $D_2$ through $\mu_3$. This again contradicts condition \eqref{airi}.
 
\medskip
 
\noindent\textbf{Rank three:} By Lemma \ref{Morse}, in this case the polynomial $g$ only depends on the variable $x_4$ and modulo a change of coordinates the equation of a general anticanonical member $D\in |-K_X|$ is:
$$x_1^2+x_2^2+x_3^2+ x_4^k=0,$$ where $k\geq 3$. The strict transform $D_1$ of such a divisor has a unique singularity of multiplicity two at the point $[0:0:0:1]$. The rank of its hessian remains at least equal to three, while the compound term becomes of degree $k-2$. As before, the singularities of $D_1$ are contained in the curve $C:=\mathrm{Bs}|L_1|\cap E_1$.

We show that in fact every general $D_1$ is smooth, allowing us to conclude that $k=3$. This is essentially done by contradiction and using the same sequence of blow-ups as in rank two, only this time we need to be more precise in order to obtain the tangency condition.

\emph{Step 1: Assume $k>3$ and thus each general $D_1\in |L_1|$ is singular. Then these singularities are contained in the same irreducible component of $C$.}  As in the previous case, this is immediate since we have a finite number of irreducible components and an infinite number of divisors. Denote this component by $C_1$.

In what follows, let $|L_1|^0$ be the Zariski open set in $|L_1|$ such that for all $D_1\in |L_1|^0$ we have $D_{1,sing}\subset \mathrm{Bs}|L_1|$. Consider $$\mathcal{U}:=\{(D_1,x_1)\ \vert \ D_1\in |L_1|^0, \ x_1\in D_{1,sing}\cap C_1\}\subset |L_1|^0\times X_1$$ to be the universal family over $|L_1|^0$ and denote by $p_1$ and $p_2$ the projections on its two components.

\emph{Step 2: Two general members of $|L_1|^0$ are not singular at the same point of the curve $C_1$.} Choose the first general member $D'_1\in |L_1|^0$ and let $x_1\in C_1$ be its singular point. The set $$F_1:=\left\{D_1\in |L_1|^0 \vert \ x_1\in D_{1,sing}\right\}$$ is a fiber of $p_2$, so it is closed. First observe that this set cannot be dense. Otherwise, every general element $D_1\in |L_1|^0$ would be singular at $c_1$, say of multiplicity $m\geq 2$. Then by blowing up $X_1$ at $c_1$ one would obtain a contradiction to condition \eqref{airi}, since $a_2=6$ and $r_2=2+m\geq 4$. Hence $F_1$ is closed, but not dense, and by generality we can choose $D''_1$ in its complement, this way making sure that its singular point does not coincide with $c_1$.

\emph{Step 3: Having fixed $D_1'$ and $D''_1$ as above, let $P:=\langle D_1',D_1''\rangle$ to be the pencil that they generate. Then the singular loci of the members of $P$ cover the entire curve $C_1$.} The set $$\mathcal{T}=\{(D_1,x_1)\ |\ D_1\in P, \ x_1\in D_{1,sing}\cap E_1\}$$ is a closed subset of $\mathcal{U}$, and by Step 1 the second projection $p_2:\mathcal{U}\rightarrow X_1$ maps it onto a closed subset of $C_1$. Using Step 2 and the continuity of $p_2$, we conclude that its image must be all of $C_1$. 

\emph{Step 4: We obtain a contradiction and conclude that $k=3$.}

By Remark \ref{post-tangency} we obtain that $T_{D_1'}|_{C_1{_{gen}}}=T_{D_1''}|_{C_1{_{gen}}}$. Similarly to the rank two case, we blow up $X_1$ along $C_1$ and because we have the following exact sequences: 
\begin{center}
$\begin{array}[c]{ccccccccc}
  0 &  \rightarrow &  \mathcal{T}_{C_1{_{gen}}}  & \rightarrow &  \mathcal{T}_{D_1'}|_{C_1{_{gen}}} & \rightarrow & \mathcal{N}_{C_1/{D_1'}} & \rightarrow & 0\\
  && \rotatebox{90}{=} && \rotatebox{90}{=} \\
 0 &  \rightarrow &  \mathcal{T}_{C_{1_{gen}}}  & \rightarrow &  \mathcal{T}_{D_1''}|_{C_1{_{gen}}} & \rightarrow & \mathcal{N}_{C_1/{D_1''}} & \rightarrow & 0,
\end{array}$
\end{center}
we deduce the fact that the strict transforms $D_2'$ and $D''_2$ intersect along a surface $S$. However, contrary to the rank two case, here the strict transform of $E_1$ does not contain $S$ since the tangent cone is smooth at the generic point of $C_1$.

By doing the same blow-up of $X_2$ along $S$ and using that $S\not\subset E_1$ we again arrive at a contradiction of condition \eqref{airi} as the coefficients are $a_3=6$ and $r_3=4$.

\medskip

\noindent\textbf{Rank four:} This case does occur, and it is precisely the one illustrated in the example \cite[Ex. 2.12]{HV11} mentioned in the introduction. Together with the only possible case in rank three, this proves the theorem. \end{proof}

Note that throughout the proof, despite having started with a fixed singularity of $|-K_X|$, we have come across singularities of elements in $|L_1|$ "moving" along a curve $C_1$. We now see what happens if that had already been the case for $|-K_X|$.

\subsection{Moving singularities}

As usual, $X$ is a four-dimensional Fano manifold. Proposition \ref{known},\ref{canonical} states that all general elephants have isolated singularities. Consider the set \[V:=\{x\in D_{sing}\cap \mathrm{Bs}|-K_X| \, | \, D\in |-K_X| \ \textup{general}\}.\] If it contains a component of strictly positive dimension, we say that $|-K_X|$ has moving singularities along the component in question. As all our computations are local, we analyze the two possible dimensions separately.

\subsubsection{The curve case}

We are in the most elementary situation of a moving singularity: in the base locus of $|-K_X|$ there exists a curve, which we denote by $C_0$, such that for all general $D\in |-K_X|$ there exists a point $x\in D_{sing}\cap C_0$. Suppose that the set of such points is dense in $C_0$. Here is the central result of this section:

\begin{theorem}
\label{mobcv}
Let $X$ be a four-dimensional Fano manifold and using the terminology above suppose that $|-K_X|$ has moving singularities along a curve $C_0$. Then around this point each general elephant is defined by an equation of the form:\[x_1^2+x_2^2+x_3^2+x_4^2=0.\]
\end{theorem}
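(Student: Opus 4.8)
The plan is to mimic the structure of the proof of Theorem \ref{fixed}, but starting from the curve $C_0 \subset \mathrm{Bs}|-K_X|$ along which the singularities move, rather than from a fixed point. The first move is to blow up $X$ along $C_0$: let $\mu_1 : X_1 \to X$ be this blow-up, with exceptional divisor $E_1$. Since $\mathrm{codim}_X C_0 = 3$, we get $a_1 = 2$. As before, the multiplicity $m_1 \geq 2$ of a general $D$ at its singular point on $C_0$ controls $r_1$; and since Theorem \ref{thm 1} forces $D$ to be terminal, hence of multiplicity exactly two at that point, we in fact have $m_1 = 2$, so $r_1 = 2$. Already here $a_1 + 1 = 3 < 4 = 2r_1$ looks dangerous — but one must be careful, because the singular point is only a single (moving) point on the fibre of $E_1$ over a generic point of $C_0$, so $D_1$ need not contain $E_1$ as a component and the relevant $r_1$ is the generic multiplicity of $\mu_1^* D$ along $E_1$, which is $1$ when the singularity is an isolated point on a generic fibre. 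So the correct reading is $r_1 = 1$, $a_1 = 2$, no contradiction yet; the singular point of $D_1$ lies on $E_1 \cap D_1$, which is a multisection (or section) of $E_1 \to C_0$.

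Next I would organize the argument by the rank $k \in \{1,2,3,4\}$ of the Hessian of a local equation of $D$ at its moving singular point, exactly as in Theorem \ref{fixed}, using the Morse Lemma \ref{Morse} in a family over $C_0$ (the rank is generically constant along $C_0$, so this is legitimate). The goal is to show that ranks $1$, $2$ and $3$ all lead to contradictions of condition \eqref{airi}, leaving only rank $4$, i.e. the equation $x_1^2 + x_2^2 + x_3^2 + x_4^2 = 0$. For each rank, the strategy is the familiar one: restrict $|L_1|$ to $E_1$, identify the tangent cone of $D$ at the moving singular point with $D_1 \cap (\text{fibre of } E_1)$, and feed this back in. In low rank the tangent cone is singular along a positive-dimensional locus inside the fibre of $E_1$ over a generic point of $C_0$; running over $C_0$ this produces a subvariety of $\mathrm{Bs}|L_1| \cap E_1$ of dimension one higher than in the fixed case. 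So I expect to need an analogue of Lemma \ref{Bs-curve} adapted to this setting: a bound saying $\mathrm{Bs}|L_1| \cap E_1$ cannot be too large, obtained by blowing up the putative large base locus and violating \eqref{airi} via discrepancy/multiplicity bookkeeping (the computations $a_i, r_i$ are entirely parallel to those in Lemma \ref{Bs-curve}, just with the codimension-$3$ starting center shifting all the discrepancies down by one relative to the point case). Concretely: rank $1$ should give a surface in $\mathrm{Bs}|L_1| \cap E_1$ (the tangent cone singular along a surface in each fibre, sweeping a $3$-fold) — but $E_1$ is a $3$-fold not containing $D_1$, so that is impossible, contradiction. Rank $2$ gives the tangent cone singular along a line in each fibre, sweeping a surface $S_1 \subset \mathrm{Bs}|L_1| \cap E_1$; then as in Theorem \ref{fixed} one uses the Tangency Lemma \ref{tangency lemma} (in its surface form from Remark \ref{post-tangency}) to conclude all general $D_1$ are mutually tangent and tangent to $E_1$ along $S_1$, blow up along $S_1$, and iterate until \eqref{airi} breaks. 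Rank $3$ is the delicate case, again handled as in Theorem \ref{fixed} via the four-step argument (the singular points of $D_1$ move along a curve, Steps 1–4 producing tangency and then a contradiction by the double blow-up), the only difference being that the Hessian rank of $D_1$ stays $\geq 3$ and the compound term drops from degree $k$ to degree $k-2$, forcing $k = 3$ and then excluding even that.

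The main obstacle I anticipate is bookkeeping the discrepancies and multiplicities correctly in the moving-point setting: unlike the fixed-singularity case, the singular point is not the centre of a blow-up but a moving point on an exceptional divisor, so one must be careful about whether $D_1$ (or its further strict transforms) actually contains the relevant centre, and hence whether the coefficient $r_i$ picks up a contribution. The clean way around this is to always blow up the *closure of the locus swept by the singular points* (a curve or surface in $X_1$, $X_2$, …) rather than individual points, so that genericity of the point on $C_0$ is preserved and the universal-family dimension count of Remark \ref{post-tangency} applies verbatim. A secondary technical point is justifying that the Hessian rank and the Morse normal form vary nicely in a family parametrized by (an open subset of) $C_0$; this follows from upper semicontinuity of the corank together with the holomorphic Morse lemma applied fibrewise, and the genericity of $D$ lets us pass to an open set of $C_0$ on which everything is constant. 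Once these are in place, every case reduces to an explicit violation of $a_i + 1 \geq 2 r_i$ — computed exactly as in Lemma \ref{Bs-curve} and in the rank-two and rank-three parts of the proof of Theorem \ref{fixed} — so no genuinely new idea beyond a careful re-run of that machinery is required.
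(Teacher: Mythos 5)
Your skeleton is right as far as it goes: blow up $C_0$ (so $a_1=2$, $r_1=1$ after your self-correction), split by the rank of the quadratic part of a local equation, use the tangency of general elephants along $C_0$ from Remark \ref{post-tangency}, and hunt for violations of \eqref{airi}. But there are two genuine gaps. First, the normalization you propose --- Morse normal form varying in a family over $C_0$ --- is precisely what the paper's proof cannot afford: one cannot simultaneously put $C_0$ into coordinate-axis form (needed to write the blow-up explicitly) and put $D$ into Morse form. The paper instead normalizes $C_0$ and a \emph{smooth} general elephant $\widetilde D$ (to $x_3=0$), and then works with the full symmetric matrix $M_D$ of the quadratic part, whose shape (vanishing of $m_{11},m_{12},m_{14}$) is forced by the tangency $T_D|_{C_{0,gen}}=T_{\widetilde D}|_{C_{0,gen}}$; the case division is then on $\rk M_D$ and on whether $m_{13}\neq 0$ (your plan omits the degenerate case $m_{13}=0$, where $D_1\cap E_1$ is non-reduced along the fibre $\PP^2$ and $D_1$ can be singular off the expected curve). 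Relatedly, your rank-one exclusion by dimension count conflates the fibre $\PP(T_xX)\cong\PP^3$ of a point blow-up with the fibre $\PP(N_{C_0/X,x})\cong\PP^2$ of the curve blow-up; the paper instead excludes rank one by Koll\'ar's result that moving singularities are of type $cA$, hence of corank at most two.

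The more serious gap is rank three. You assert it is ``handled as in Theorem \ref{fixed} via the four-step argument, forcing $k=3$ and then excluding even that,'' but the fixed-case rank-three argument \emph{permits} $k=3$ --- it only rules out $k>3$ --- so it cannot by itself exclude rank three here, and the theorem you are proving requires that rank three not occur at all in the moving case. The paper's rank-three argument is structurally different and is the bulk of the proof: after blowing up $C_0$ one shows the singular point of $D_1$ is the unique double root of $m_{44}u_3^2+2m_{24}u_3+m_{22}=0$, hence the singularities of $|L_1|$ again move along a curve $C_1\subset S_1$; one then tracks the matrix through an explicit coordinate change to show $M_{D_1}$ has the special form \eqref{MD1}, blows up $C_1$, shows $M_{D_2}$ has rank two, and finally runs a five-fold blow-up sequence whose discrepancies ($a_3=a_1+a_2+2=8$, $r_3=4$, then $a_4=9$, $r_4=5$, then $a_5=18,\ r_5=10$ or $a_5=10,\ r_5=6$) depend on a careful determination of which exceptional divisors contain each successive centre (e.g.\ that $C_2\subset E_1'\cap E_2$ but $S_3$ lies only in $E_3$). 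None of this is ``a re-run of the fixed-case machinery,'' and without it your plan does not close.
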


Just as we have done previously, a good approach is to start by blowing up the singular locus, the difference being that instead of focusing on a single point and a specific divisor, we must consider the linear system as a whole.

\medskip

\noindent\textbf{Global geometric context:} As a general elephant $D$ moves through the anticanonical system, its isolated singularities describe a curve $C_0$. By Remark \ref{post-tangency} we have that $T_D|_{{C_0}_{gen}}$ is independent of $D\in |-K_X|$. This means that by blowing up $C_0$ all strict transforms of general elephants will have a surface in common. Denote this blow-up by $\mu_1:X_1\rightarrow X$, the exceptional divisor by $E_1$ and the common surface by $S_1$. By Bertini's Theorem, if a general divisor $D$ is singular at a point $x\in C$, then \[D_{1,sing}\cap \PP^2\subset D_{1,sing}\cap \PP^2\cap S_1,\] where $D_1$ is the strict transform of $D$ and $\PP^2$ is the projective plane in $E_1$ mapping onto $x$.

Note that after this step the coefficients in Notation \ref{notation} are $a_1=2$ and $r_1=1$. 

\medskip

\noindent\textbf{Local coordinates:} The purpose of this discussion is to find a way to check if the strict transform of a general elephant is singular, and then to analyze its singularities. 

For simplicity, start by choosing coordinates on an open set $U$ such that $C_0$ is given by: \begin{align*}C_0 \: : \:\{ x_2=x_3=x_4=0 \}\end{align*} and single out two general elephants: one denoted by $D$, which is singular at the origin and the second, denoted by $\widetilde{D}$, which is smooth inside $U$. By restricting $U$ we may assume that the origin is the only singular point of $D$ and, up to a coordinate change, that the local equation of $\widetilde{D}$ is precisely $$x_3=0.$$ This choice of coordinates, convenient for obtaining a straightforward expression of $\mu_1$, comes at the expense of the precise form of $D$ given by the Morse Lemma in \hbox{Section \ref{fixedsec}.} 

Denote by $\mu_1:U_1\rightarrow U$ the blow-up of $U$ along $C_0$. We choose the chart on $U_1$ given by  \begin{align*}\left\{ (x_1,x_2,x_3,x_4)(z_2,z_3) \:\left \vert \left. \begin{array}{c} x_3=x_2z_2 \\ x_4=x_2z_3 \end{array}  \right. \right. \right\}\end{align*}  and the following local coordinates on it: \begin{align*}(u_1,u_2,u_3,u_4) \longrightarrow (u_1,u_4,u_2u_4,u_3u_4)(u_2,u_3).\end{align*} Note that in this chart $\mu_1$ is given by $$ (u_1,u_2,u_3,u_4)\rightarrow (u_1,u_4,u_2u_4,u_3u_4), $$ the exceptional divisor $E_1$ has the equation $u_4=0$ and the projective plane above the origin has the equations $u_1=u_4=0$. 

\medskip

The surface $D_1\cap E_1$ has two irreducible components: $S_1$, the surface that is common to all general elements in $|L_1|$, and the $\PP^2$ above the origin. By intersecting them, we obtain a curve, denoted by $C$, which will contain the singularities of $D_1$. Since the singular point of $\widetilde{D}$ is outside of $U$ we have that $S_1=\widetilde{D}_1\cap E_1$. It is then easy to deduce that the local expressions of $S_1$ and $C$ are \begin{align*}S_1=\{u_4=u_2=0\} \ \mbox{and} \ C=\{u_1=u_4=u_2=0\}.\end{align*} 

\medskip

If $f$ is the local equation of $D$, write  $f=q+h$ where $q$ is the quadratic part of $f$ and $h$ contains higher order terms. Denote by $M_D=(m_{ij})_{1\leq i,j\leq 4}$ the matrix of $q$ viewed as a quadratic form. This is a symmetric matrix whose double is equal to the hessian matrix of $f$ at the origin.

As the tangent spaces of $D$ and $\widetilde{D}$ coincide at every point of $C_{0,gen}$, the jacobian of $f$ is proportional to the vector $(0\  0\ 1 \ 0)$ at each point $x\in C_{0,gen}$. We deduce that $f$ does not contain monomials of type $x_1^kx_2$ or $x_1^kx_4$, in particular $M_D$ is of the following form: \begin{align*}M_{D}=\begin{pmatrix}0 &0 & m_{13}& 0\\ 0 &m_{22}& m_{23} &m_{24} \\ m_{13}& m_{23}& m_{33}&m_{34} \\0& m_{24}& m_{34} &m_{44} \end{pmatrix}. \end{align*} This matrix will be the main object of study in our case-by-case analysis.

\begin{proof}[Proof of Theorem \ref{mobcv}]
We proceed to eliminate all cases in which the rank of $M_D$ is less than four.

\paragraph{Case 1:} $m_{13}\neq 0.$

When restricting $J_{D_1}$ to $C$ we obtain:
\begin{align*}
 J_{D_1}|_{C}= \begin{pmatrix} 0 \\ 0 \\ 0 \\  m_{44}u_3^2+2m_{24}u_3+m_{22}\end{pmatrix},
\end{align*}
thus the singular points of $D_1$ are given by:
\begin{align} \label{D1sing} \left \{\begin{array}{l} u_1=u_2=u_4=0 \\ [6pt] 
m_{44}u_3^2+2m_{24}u_3+m_{22}=0
\end{array} \right. \end{align}

We separately analyze the cases corresponding to different ranks of $M_{D}$. The aim is to prove that rank four is the only possibility. 

\medskip

\noindent\textbf{Rank one:}
This case doesn't occur because by Theorem \cite[Thm. 4.4]{Kollar2} each moving singularity is of type $cA$, meaning that its degree two part is at least of rank two as a quadratic form.

\medskip

\noindent\textbf{Rank two:}
First we show that $D_1$ is singular at $C_{gen}$, i.e. that all the coefficients of the degree two equation in (\ref{D1sing}) are zero. Denote by $M_{ij}$ the $3\times 3$ minors in $M_D$ obtained by eliminating row $i$ and column $j$. Since $M_D$ has rank two, all of the $M_{ij}$ are zero. Note that $$M_{44}=m_{13}^2m_{12},\  \ M_{24}=m_{13}^2m_{24} \mbox{ and } M_{22}=m_{13}^2m_{44},$$ and since $m_{13}\neq 0$ the result is immediate.

By Remark \ref{post-tangency}, this implies that all elements in $D_1\in |L_1|$ are tangent along $S_1$. If we blow-up $U_1$ along this surface, the strict transforms of these divisors will have a surface in common, that is in its generic point defined by $\PP(\mathcal{N}^*_{S_1 / D_1})$. Denote it by $S_2$. We construct a series of blow-ups as follows:

\[
  \xymatrix@!{
              & {U} & \ar[l]_{\mu_1}   {U_1} & \ar[l]_{\mu_2}   {U_2} &  \ar[l]_{\mu_3}  {U_3,} 
                    }
\]
where $\mu_2$ and $\mu_3$ are the blow-ups of $U_1$ along $S_1$ and of $U_2$ along $S_2$ respectively. By computing the coefficients
\begin{align*} \left \{ \begin{array}{c} a_2=a_1+1=2+1=3 \\ r_2=r_1+1=1+1=2 \end{array} \right. \ \text{and} \ \left \{ \begin{array}{c} a_3=a_2+1=4 \\ r_3=r_2+1=3\end{array} \right. \end{align*} we obtain an immediate contradiction to condition \eqref{airi}.

\medskip

\noindent\textbf{Rank three:} 
All of the coefficients of the equation $m_{44}u_3^2+2m_{24}u_3+m_{22}=0$ being equal to zero would imply $\mathrm{rank}(M_D)=2$. Since $$det(M_D)=m_{13}^2 \times (m_{22}m_{44}-m_{24}^2)=0,$$ we may assume, up to choosing a different chart on $U_1$, that $m_{44}\neq 0$. Since $m_{22}m_{44}-m_{24}^2=0$, we have a degree two equation with the double root $u_3=-\dfrac{m_{24}}{m_{44}}$, namely $$D_{1,sing}=  \left(0,\,0,\,-\dfrac{m_{24}}{m_{44}}, \, 0\right).$$

Thus if $M_D$ is of rank three, the strict transform $D_1$ has exactly one singular point, the same being true for all general elephants. This means that the linear system of strict transforms $|L_1|$ also has singularities that are moving inside of $S_1$. In order to show that these singularities will not cover the entire surface it suffices to prove that $\mathrm{Bs}|L_1|$ has a reduced structure at $S_1$ (see discussion in Section \ref{surface case} for details). This becomes apparent when we look at the local equations:  since as before $m_{44}$, $m_{24}$ and $m_{22}$ cannot be all at once equal to zero, the equation of $D_1\cap \widetilde{D}_1=D_1|_{\{u_2=0\}}$ contains at least one of the monomials $u_3u_4$, $u_3^2u_4$ or $u_4$, thus this intersection only contains $S_1=\{u_2=u_4=0\}$ with multiplicity one.

The singularities of general divisors in $|L_1|$ must then move along a curve $C_1\subset S_1$. By Remark \ref{post-tangency}, all elements of $|L_1|$ must be tangent along $C_1$. We want to explicitly construct the blow-up of $U_1$ along $C_1$ and at the same time keep track of the singular points of $D_1$.

\medskip

We need to control the rank of the singularity of $D_1$ in order to understand whether we have improved the initial situation by doing the first blow-up. We do a coordinate change that brings the singular point onto the origin:
$$(u_1,\,u_2,\, u_3, \,u_4) \mapsto (u_1,\, u_2, \, u_3+\dfrac{m_{24}}{m_{44}},\, u_4)$$ and we denote by $M_{D_1}=(p_{ij})_{1\leq i,j,\leq 4}$ the matrix of its degree two part as a quadratic form. Again, if $f_1$ is a local equation of $D_1$, let $f_1=q_1+h_1$, where $q_1$ is the quadratic part of $f_1$.

We claim that $p_{12}=m_{13}$ and $M_{D_1}$ is of the form:  \begin{align}\label{MD1} M_{D_1}=\begin{pmatrix}0 &p_{12} & 0& p_{14}\\ p_{12} &0& 0 &p_{24} \\ 0& 0& 0&0 \\p_{14}& p_{24}& 0 &p_{44} \end{pmatrix}.\end{align}

Indeed, take an arbitrary monomial of $f$ and trace it throughout the first blow-up and the coordinate change:  
\begin{align*} \textswab{m}=x_1^{d_1}x_2^{d_2}x_3^{d_3}x_4^{d_4} \longrightarrow \overline{\textswab{m}}=u_1^{d_1}u_2^{d_3}u_3^{d_4}u_4^{d_2+d_3+d_4-1}Ê& \longrightarrow \\ 
\longrightarrow  \textswab{m}_1=u_1^{d_1}u_2^{d_3}\left(u_3-\dfrac{m_{24}}{m_{44}}\right)^{d_4}u_4^{d_2+d_3+d_4-1}\end{align*}Ê
Note that $\textswab{m}_1$ splits into $d_4+1$ monomials of degrees $d_1+d_2+2d_3+d_4+k-1$, where $k\in\{0\ldots d_4\}$.

\medskip

\textit{Step 1: } $p_{22}=p_{23}=p_{33}=0$.  A monomial in $\textswab{m}_1$ contributing to $M_{D_1}$ will be of degree two and it follows from the expression above that there will be none of type $u_2^2$, $u_2u_3$ or $u_3^2$. Indeed, for all of these monomials we have $d_3+k=2$, automatically increasing the total degree to at least three.

\medskip

\textit{Step 2: } $p_{13}=0$. The monomial $u_1u_3$ can only be obtained if $d_1=1$ and $k=1$, while all other powers are zero. This implies $d_4=1$ and $d_3=d_2=0$, thus the coefficient of $u_1u_3$ is $2m_{14}=0$.
\medskip

\textit{Step 3: } $p_{12}=m_{13}$ As in the previous step, we obtain that the coefficient of $u_1u_2$ is $2m_{13}$.

\medskip

\textit{Step 4: } $p_{34}=0$.  The monomial $u_3u_4$ can be obtained from either $x_2 x_4$ or $x_4^2$ in $f$, corresponding to $k=1$, $d_4=2$ and $k=1$, $d_2=d_4=1$ respectively. Its coefficient $2p_{34}$ will be $$m_{44}\times 2\left(-\dfrac{m_{24}}{m_{44}}\right)+2m_{24}=0.$$

\medskip

\textit{Step 5: } $p_{11}=0$. The contributions to $p_{11}$ come from monomials of the form $x_1^2x_2$ and $x_1^2x_4$. Because of the tangency condition relating the jacobian of $D$ with that of $\widetilde{D}$, the coefficients in $f$ of these two monomials are zero.

\medskip

This proves the claim. We have thus obtained a much simpler matrix $M_{D_1}$ after the first blow-up, though its rank can still be equal to three.

\medskip

We now construct the second blow-up. Since we cannot change coordinates while maintaining the format of $M_{D_1}$, we will consider an arbitrary curve in $S_1$ passing through the origin and denote it by $C_1$. This curve is smooth at the origin because of the general choice of $D$. Locally it is given by: \begin{align*} \left \{\begin{array}{l} u_2=u_4=0 \\ [6pt] 
g(u_1,u_3)=0
\end{array}, \right. \end{align*} where $g$ is an arbitrary holomorphic function such that $g(0,0)=0$. 
\medskip

Denote by $\mu_2:U_2\rightarrow U_1$ the blow-up of $U_1$ along $C_1$. One of the charts on $U_2$ is given by  \begin{align*}\left\{ (u_1,u_2,u_3,u_4)(t_1,t_2) \: \left \vert \left\{ \begin{array}{c} u_2=t_1g(u_1,u_3) \\ u_4=t_2 g(u_1,u_3) \end{array} \right. \right. \right\}\end{align*}  and we choose the following local coordinates on it: $v_1:=t_1,\, v_2:=t_2, \,v_3:=u_1$ and $ v_4:=u_3$. The chart becomes: \begin{align*}(v_1,v_2,v_3,v_4) \longrightarrow (v_4,v_1g(v_3,v_4),v_3,v_2g(v_3,v_4))(v_1,v_2).\end{align*} In this chart the exceptional divisor $E_2$ is given by $g(v_3,v_4)=0$ and the projective plane above the origin has the equations $v_3=v_4=0$. The strict transform of $E_1$, denoted by $E_1'$, is given by $v_2=0$.

\medskip 

Denote by $D_2$ the strict transform of $D_1$ through $\mu_2$. We will show that $D_2$ is singular at the origin and we will compute the rank of $M_{D_2}$ at this point.

Consider an arbitrary monomial in $f_1$, denoted by $\textswab{m}_1=u_1^{d_1}u_2^{d_2}u_3^{d_3}u_4^{d_4}.$ Note that since $S_1\subset D_1$ we must have $d_2+d_4>0$. As before, its contribution to the local equation of $D_2$ is $\textswab{m}_2=v_1^{d_2}v_2^{d_4}v_3^{d_3}v_4^{d_1}g(v_3,v_4)^{d_2+d_4-1}.$ A short computation shows that the partial derivatives of $\textswab{m}_2$ vanish at the origin.

If the origin is not an isolated singular point, we are done. Indeed, if $D_2$ is singular along an entire curve, by Remark \ref{post-tangency} all elements in $|L_2|$ are tangent along a surface denoted by $S_2$. We construct the following blow-up sequence:
\[
  \xymatrix@!{
              & {U} & \ar[l]_{\mu_1}   {U_1} & \ar[l]_{\mu_2}   {U_2} &  \ar[l]_{\mu_3}  {U_3}  &  \ar[l]_{\mu_4}  {U_4,}  
                    }
                    \]
where $U_3=\mathrm{Bl}_{S_2}U_2$ and $U_4=\mathrm{Bl}_{S_3}U_3$, where $S_3=\PP(\mathcal{N}^*_{S_2 / D_2})$ is the surface that the strict transforms of elements in $|L_2|$ have in common. The coefficients are:
\begin{align*} \left \{ \begin{array}{c} a_2=a_1+2=4 \\ r_2=r_1+1=2 \end{array} \right.\text{,} \ \left \{ \begin{array}{c} a_3=a_2+1=5 \\ r_3=r_2+1=3\end{array} \right.  \text{and} \  \left\{ \begin{array}{c} a_4=a_3+1=6 \\ r_4=r_3+1=4 \end{array}, \right.\end{align*} a contradiction to condition \eqref{airi}. 

Assume now that the origin is an isolated singular point of $D_2$. Then it is neither a fixed singularity for the system $|L_2|$ nor is it a moving singularity along a surface. Indeed, a short computation shows that $\widetilde{D}_2$ is smooth at the origin and Proposition \ref{surface} allows us to eliminate the surface case if $\mathrm{Bs}|L_2|$ has a reduced structure at $S_{2,gen}$. If this is not the case, all elements in $|L_2|$ are tangent along $S_2$ and we repeat the sequence of blow-ups above in order to derive a contradiction.

We now proceed to determining the rank of this singularity. If a monomial in $\textswab{m}_2$ is of degree two then $d_2+d_4=1$ and $d_1+d_3=1$, in particular it comes from certain degree two monomials in $f_1$. Using that $M_{D_1}$ is of the form in \eqref{MD1}, we obtain that $M_{D_2}$ is of the following form: \begin{align*} M_{D_2}=\begin{pmatrix}0 & 0 & 0& p_{12}\\ 0 &0& 0 &p_{14} \\ 0& 0& 0&0 \\p_{12}& p_{14}& 0 &0 \end{pmatrix}.\end{align*} 

This is a rank two matrix, therefore we can apply the same strategy as in the previous case. The coefficients will not be exactly the same since we have already blown up two subvarieties, we will however obtain the same type of contradiction.

\medskip

For this we need to construct a sequence of five blow-ups:
\[
  \xymatrix@!{
              & {U} & \ar[l]_{\mu_1}   {U_1} & \ar[l]_{\mu_2}   {U_2} &  \ar[l]_{\mu_3}  {U_3}  &  \ar[l]_{\mu_4}  {U_4}  &  \ar[l]_{\mu_5}  {U_5,}  
                    }
                    \]
where as before $U_1=Bl_{C_0}U$ and $U_2= Bl_{C_1}U_1$. The morphisms $\mu_3$, $\mu_4$ and $\mu_5$ will be the blow-ups along a curve $C_2$ and two surfaces denoted by $S_3$ and $S_4$ respectively. 

The choices of the centers are straightforward, we proceed exactly as in the previous rank two case: \begin{enumerate}[$\cdot$] \item$C_2\subseteq U_2$ is the curve along which the singularities of general members of $|L_2|$ move, \item $S_3\subseteq U_3$ is the surface along which general members of $|L_3|$ are tangent, \item $S_4\subseteq U_4$ is the surface in $\mathrm{Bs}|L_4|$ that exists because of this tangency. \end{enumerate}The only thing we need to additionally keep track of is the interaction between the exceptional loci. 

We briefly come back to the local picture in order to eventually describe $\mu_4$. The first observation is that the origin belongs to both divisors $E_2$ and $E_1'$, the strict transform of $E_1$ through $\mu_2$. As the sequence of blow-ups does not depend on the coordinate choice, we have just used a local computation to show that the singular point of the strict transform of a general $D\in |-K_X|$ belongs to $E_1'\cap E_2$. The same must be then true for the curve formed precisely by these singular points, that is to say $C_2$.

We claim that the only exceptional divisor that $S_3$ belongs to is $E_3$. Indeed, a short computation shows that \[ T_{E_2}|_{C_{2,gen}}\neq T_{\widetilde{D}_2}|_{C_{2,gen}}  \textup{ and } T_{E_1'}|_{C_{2,gen}}\neq T_{\widetilde{D}_2}|_{C_{2,gen}}\] where $\widetilde{D}_2$ is the strict transform of $\widetilde{D}$ through $\mu_1\circ \mu_2$. As $T_{\widetilde{D}_2}|_{C_{2,gen}}=T_{D_2}|_{C_{2,gen}}$ by Remark \ref{post-tangency}, this proves that the divisor $E_4$ will be disjoint from all strict transforms of $E_1$ and $E_2$, but will have a surface in common with $E_3$. At this stage, the coefficients are: 
\begin{align*} \left \{ \begin{array}{c} a_2=a_1+2=4 \\ r_2=r_1+1=2 \end{array} \right. \ \text{,} \ \left \{ \begin{array}{c} a_3=a_1+a_2+2=8 \\ r_3=r_1+r_2+1=4\end{array} \right.  \text{and} \  \left\{ \begin{array}{c} a_4=a_3+1=9 \\ r_4=r_3+1=5 \end{array}, \right.\end{align*} which doesn't yet allow us to conclude. The fifth blow-up is of the surface $S_4\subset E_4$ defined above, which may or may not also be included in $E_3$. The two cases both lead us to a contradiction to condition \eqref{airi}:
\begin{align*} \left \{ \begin{array}{c} a_5=a_4+a_3+1=18 \\ r_5=r_4+r_3+1=10\end{array} \right.  \text{or} \  \left\{ \begin{array}{c} a_5=a_4+1=10 \\ r_5=r_4+1=6 \end{array}. \right.\end{align*} 

\medskip

\paragraph{Case 2:} $m_{13}=0$.

\medskip

This is a degeneration of the previous situation and as such will be easier to exclude. Geometrically, the condition says that the intersection $D_1\cap E_1$ has a non-reduced structure along the $\PP^2$ above the origin. As before, we have that  \begin{align*}S_1=\{u_4=u_2=0\} \ \mbox{and} \ C=\{u_1=u_4=u_2=0\}.\end{align*} The problem here is that $D_1$ may be singular outside $C$: if we restrict the jacobian of $D_1$ just to the $\PP^2$ above the origin, we obtain
 \[J_{D_1}|_{\PP^2} = \begin{pmatrix} 0 \\ 0 \\ 0 \\  m_{22}+ 2m_{23}u_2+m_{33}u_2^2+2m_{34}u_2u_3+2m_{24}u_3+m_{44}u_3^2\end{pmatrix} \footnote{this is not the case if $m_{13}\neq 0$, as the first line of $J_{D_1}|_{\PP^2}$ is $2m_{13}u_2.$}
\]
which is included in $C$ iff either $m_{23}\neq 0$ while $m_{22}=m_{33}=m_{34}=m_{24}=m_{44}=0$ or $m_{33}\neq 0$ and $m_{22}=m_{23}=m_{34}=m_{24}=m_{44}=0$, both cases leading to $D_1$ being singular along the entire curve $C$. The latter is impossible since it would mean that the rank of $M_D$ is one and \cite[Thm. 4.4]{Kollar2} implies it should be at least equal to two. The former also leads to a contradiction: we have that $M_D$ is of rank two and $D_1$ is singular along a curve. We perform the same blow-ups as in the rank two case and obtain the coefficients $a_3=4$ and $r_3=3$ which contradict condition \eqref{airi}. \end{proof}

\subsubsection{The surface case}
\label{surface case}
Now that we have discussed the situation where the singularities of general elements in $|-K_X|$ move along a curve $C$, we claim that this is the maximal-dimensional case that we need to consider. Specifically, in what follows we show that the set $$\{x\in D_{sing} | D\in |-K_X|^0 \}$$ cannot contain a surface: we prove that if $\mathrm{Bs}|-K_X|$ contains a reduced surface $S$, then in fact the singular points of all general $D\in |-K_X|^0$ belong to a curve included in $S$.

We are only concerned with the smooth points $x\in S$, since the singular ones already belong to a subset of the desired codimension. Fix a general element $D\in |-K_X|^0$ that is singular at $x$. The divisor $S\subset D$ is not Cartier at $x$, as otherwise all points in $D_{sing}\cap S$ would be singular points of $S$. Since every $M\in |-K_X|_D$ is Cartier at $x$ we see that there exists another component $R\subset M$ with $x\in R$. We can then decompose $|-K_X|_D$ as follows:
\begin{equation} \label{decomp}|-K_X|_D=S+|R_D|,\end{equation} such that $x\in \mathrm{Bs}|R_D|$. Note that the linear system $|R_D|$ may have fixed components, as $\mathrm{Bs}|-K_X|$ possibly contains other surfaces aside from $S$.

We show that $\mathrm{Bs}|R_D|$ is independent of the initial choice of $D$. Indeed, fix $D'\in |-K_X|$ to be another general element and through the same process construct $|R_{D'}|$. Then $D\cap D'$ is a subscheme of $D$ having $S$ as an irreducible component. Denote by $T:=\mathrm{Supp}(D\cap D'\setminus S)$. We have that $||-K_X|_D|_T=||-K_X|_{D'}|_T=|-K_X|_T$ since the following diagram of restrictions is commutative:
\[
  \xymatrix{
  		 &  H^0(D,\oo_D(-K_X)) \ar[rd] \\
                H^0(X,\oo_X(-K_X)) \ar[ru] \ar[rd]  & & H^0(T,\oo_T(-K_X))  \\
                & H^0(D', \oo_{D'}(-K_X)) \ar[ru]                   
               }
                    \]
                    
By restricting \eqref{decomp} and the analogous decomposition of $|-K_X|_{D'}$ with respect to $|R_{D'}|$ to $T$ we obtain that \[|-K_X|_T=S_T+|R_D|_T=S_T+|R_{D'}|_T.\] So $|R_D|_T=|R_{D'}|_T$, which means $\mathrm{Bs}|R_D|=\mathrm{Bs}|R_{D'}|$ since both base loci are included in $T$.

\begin{proposition}
\label{surface}
Let $|L|$ be a linear system without fixed components on a projective four-dimensional manifold $X$. If $S\subset \mathrm{Bs}|L|$ is a reduced surface and all general elements in $|L|$ are smooth in codimension one, then the set $W=\{x \in D_{sing} \,| \, D\in |L|^0\}\cap S$ is at most of dimension one. 
\end{proposition}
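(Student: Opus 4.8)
The strategy is to run exactly the same kind of blow-up argument used throughout Section~3, but starting from the surface $S$, and to derive a contradiction to condition~\eqref{airi} under the assumption that the singular points of general $D\in|L|^0$ cover a surface rather than a curve. First I would fix a general $D\in|L|^0$ and pass to the linear system $|R_D|$ on $D$ constructed above, so that $S\subset\mathrm{Bs}|R_D|$ and, by the discussion preceding the proposition, $\mathrm{Bs}|R_D|$ is independent of the choice of $D$; this lets me work with the behaviour of $|L|$ near the generic point of $S$ in a way that does not depend on $D$. The assumption to be contradicted is that $W$ contains a component of dimension two; since the singular points of $S$ itself form a curve, we may assume that a general $x\in S$ is a smooth point of $S$ at which general $D$ is singular, with $D$ smooth in codimension one, so $x$ is an isolated singularity of $D$ of multiplicity two by Proposition~\ref{known},\ref{canonical} and the classification of terminal singularities.

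Next I would apply Remark~\ref{post-tangency} in its second form: the union of the curves $D_{sing}\cap S$ is dense in $S$, so by the Tangency Lemma (Lemma~\ref{tangency lemma}) the tangent spaces $T_D|_{S_{gen}}$ are independent of $D\in|L|$. Hence, letting $\mu_1:X_1\to X$ be the blow-up of $S$ with exceptional divisor $E_1$, all strict transforms $D_1$ of general $D$ contain a common divisor $S_1\subset E_1$ (fibrewise $\mathbb{P}(\mathcal{N}^*_{S/D})$ inside $\mathbb{P}(\mathcal{N}^*_{S/X})\cong E_1$). The coefficients after this step are $a_1=1$ (since $S$ has codimension two in the fourfold $X$) and $r_1=1$ (since $D$ has multiplicity one generically along $S$, being smooth in codimension one there — the multiplicity-two locus is a curve in $S$). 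If the singularities of the strict transforms $D_1$ already cover all of $S_1$, I would blow up $S_1$ (giving $a_2=a_1+1=2$, $r_2=r_1+1=2$) and then the common surface $S_2=\mathbb{P}(\mathcal{N}^*_{S_1/D_1})$ forced by the resulting tangency (giving $a_3=3$, $r_3=3$), contradicting \eqref{airi}. So the singularities of $D_1$ must move along a proper subvariety of $S_1$; a Bertini-type argument shows these singularities lie in $\mathrm{Bs}|L_1|\cap E_1$, and the real content is to show this intersection is at most a curve inside $E_1$ (an analogue of Lemma~\ref{Bs-curve}), after which the singularities of $D_1$ move along a curve and we are reduced to the already-settled curve case, or to another short blow-up sequence contradicting \eqref{airi}.

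The main obstacle — and the step I would spend the most care on — is controlling $\mathrm{Bs}|L_1|\cap E_1$ and, more precisely, keeping track of whether $S$ (equivalently $S_1$, and its blow-ups) is reduced in the relevant base loci, since the whole reduction to the curve case hinges on the reducedness statements invoked elsewhere in the paper (e.g. the parenthetical ``if $\mathrm{Bs}|L_1|$ has a reduced structure at $S_{gen}$''). I would handle this by the same device as in the moving-curve case: exhibit, via the local equation $f=q+h$ of $D$ with $q$ of rank $\geq 2$ (type $cA$ by \cite[Thm.~4.4]{Kollar2}), an explicit monomial in the restriction $f|_{\widetilde D}$ to a second general auxiliary divisor $\widetilde D$ showing that the relevant intersection contains $S_1$ with multiplicity one, hence that $\mathrm{Bs}|L_1|$ is reduced at the generic point of $S_1$ and the singularities of $D_1$ genuinely move only in a curve. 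The non-reduced degenerate case ($q$ of rank exactly $2$ with the wrong monomials vanishing, forcing $D_1$ singular along a whole curve in $E_1$) is then eliminated directly by a blow-up computation yielding coefficients such as $a_3=4$, $r_3=3$ that violate \eqref{airi}, exactly as in Case~2 of the proof of Theorem~\ref{mobcv}. Assembling these pieces: either the surface case collapses into the curve case (already excluded/classified), or one of the short blow-up sequences contradicts $a_i+1\geq 2r_i$; in all cases $W$ cannot contain a surface, so $\dim W\leq 1$.
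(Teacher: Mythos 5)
Your strategy breaks down at its very first substantive step, and it also cannot prove the proposition in the generality in which it is stated. You invoke Remark~\ref{post-tangency} ``in its second form'' to conclude that the tangent spaces $T_D|_{S_{gen}}$ are independent of $D$, but that form of the remark concerns the situation where each $D_{sing}$ contains a \emph{curve} and these curves sweep out $S$. In the situation you must exclude, each general $D$ has isolated singularities (by Proposition~\ref{known} in the application) and these isolated points sweep out a surface; the dimension count of Remark~\ref{post-tangency} then gives $\dim|L|=\dim\mathcal{U}=\dim|L|_x+\dim S=\dim|L|_x+2$, so $|L|_x$ has codimension \emph{two} in $|L|$ and the hypothesis of the Tangency Lemma~\ref{tangency lemma} fails. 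Consequently there is no common surface $S_1\subset E_1$ shared by the strict transforms, and every subsequent centre you blow up, and every coefficient $a_i$, $r_i$ you compute, rests on a tangency that has not been established. (This codimension jump is precisely why the surface case requires a different treatment from the curve case.) A second, independent problem is that condition~\eqref{airi} is proved in Proposition~\ref{multideal} only for $|-K_X|$ on a Fano fourfold, via ampleness of $-K_X$, nonvanishing and subadjunction, whereas Proposition~\ref{surface} is stated for an arbitrary linear system without fixed components on a projective fourfold; an argument resting on \eqref{airi} could at best handle the anticanonical special case.

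The paper's actual proof uses none of this machinery and is essentially the discussion you only partially exploit. At a smooth point $x\in S$ at which a general $D$ is singular, the divisor $S\subset D$ cannot be Cartier (otherwise every point of $D_{sing}\cap S$ would be a singular point of $S$), while every member of $|L|_D$ is Cartier at $x$; hence $|L|_D$ decomposes as $S+|R_D|$ with $x\in\mathrm{Bs}|R_D|$, and $\mathrm{Bs}|R_D|$ does not depend on $D$. If $W$ contained a surface, then $S\subset\mathrm{Bs}|R_D|$, so one could write $|L|_D=2S+|R_D'|$, meaning $D\cap D'$ contains $2S$ for every $D'\in|L|$, i.e.\ $\mathrm{Bs}|L|$ is non-reduced at the generic point of $S$ --- contradicting the hypothesis that $S$ is reduced. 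This is a purely local, two-line contradiction with no blow-ups and no discrepancy bookkeeping. You correctly record that $\mathrm{Bs}|R_D|$ is independent of $D$, but you miss that this decomposition is itself the engine of the proof; as written, your proposal has a genuine gap and does not establish the statement.
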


\begin{proof}
Since every $x\in W$ is either a singular point of $S$ or belongs to $\mathrm{Bs}|R_D|\cap S$ by the previous discussion, it is enough to show that $S\cap \mathrm{Bs}|R_D|\subsetneq S$. Indeed, suppose $S\subset \mathrm{Bs}|R_D|$, then we can write $$|L|_D=2S+|R_D'|,$$ meaning that for every $D'\in|L|$ we have that $D'\cap D$ contains $2S$. This is equivalent to $\mathrm{Bs}|L|$ having a non-reduced structure at $S_{gen}$, which contradicts the hypothesis. \end{proof}

\begin{proof}[Proof of Theorem \ref{thm 2}] We now check that the hypotheses of Proposition \ref{surface} are true for a four-dimensional Fano manifold $X$ and the anti-canonical system $|-K_X|$. Suppose for a contradiction that $\mathrm{Bs}|-K_X|$ has a non-reduced structure along an irreducible surface $S$. By \cite[Prop 4.2]{Kawamata1}, if we consider two general elephants $D$ and $D'$, they give rise to an lc pair $(D, D\cap D')$. But $S$ is a component of $D\cap D'$ of at least multiplicity two, a contradiction.

We then apply Proposition \ref{surface} and conclude that the anticanonical system $|-K_X|$ either has fixed singularities or singularities moving along a curve.  By Theorem \ref{fixed} and Theorem \ref{mobcv}, we obtain that locally analytically these points are of the form: \[x_1^2+x_2^2+x_3^2+x_4^2=0 \text{ or } x_1^2+x_2^2+x_3^2+x_4^3=0.\] \end{proof}


\begin{thebibliography}{1}

\bibitem{Ambro} F. Ambro, {\em Ladders on Fano varieties}, J. Math. Sci. (New York) 94 (1999), no. 1, pp. 1126-1135.

\bibitem{AGV} V. I. Arnold, S. M. Gusein-Zade, A. N. Varchenko, {\em Singularities of Differentiable Maps Vol I}, Birkh\"auser (1985).

\bibitem{Debarre} O. Debarre, {\em Higher-Dimensional Algebraic Geometry,} Universitext, Springer, New York (2001).

\bibitem{Floris} E. Floris, {\em Fundamental Divisors on Fano Varieties of index n-3}, Geom. Dedicata 162 (2013), pp. 1-7.

\bibitem{Fundamental} O. Fujino, {\em Fundamental theorems for the log minimal model program},  Publ. Res. Inst. Math. Sci. 47 (2011), no. 3, pp. 727-789.

\bibitem{FuGo} O. Fujino and Y. Gongyo,  {\em On canonical bundle formulae and subadjunctions}, Michigan Math. J. 61 (2012), no. 2, pp. 255-264.

\bibitem{Hartshorne} R. Hartshorne, {\em Algebraic Geometry}, Graduate Texts in Mathematics, Springer, Heidelberg (1977).
  
  \bibitem{HV11} A. H\"oring, C. Voisin, {\em Anticanonical divisors and curve classes on Fano manifolds,} Pure and Applied Mathematics Quarterly 7 (2011), No. 4, pp. 1371-1393, Special volume dedicated to Eckart Viehweg.
  
  \bibitem{Kawamata1} Y. Kawamata, {\em On Effective non-vanishing and base-point-freeness}, Asian J. of Math. \textbf{4} (2000), no. 1, pp. 173-181. Kodaira's issue.
  
  \bibitem{Kawamata2} Y. Kawamata {\em Subadjunction of log canonical divisors II}, Amer. J. Math. 120 (1998), no. 5, pp. 893-899.
  
  \bibitem{KMM} Y. Kawamata, K. Matsuda and K. Matsuki, {\em Introduction to the Minimal Model Problem}, Advanced Studies in Pure Mathematics 10, 1987, Algebraic Geometry, Sendai (1985), pp. 283-360. 

 \bibitem{Kollar}  J. Kollar, {\em Kodaira's canonical bundle formula and subadjunction}, Oxford Lecture Series in
Mathematics and its Applications 35 (1985), chapter 8, pp. 121-146. 

\bibitem{Kollar3} J. Kollar, {\em Flips, flops, minimal models, etc.} Surveys in differential geometry (Cambridge, MA, 1990), pp. 113-199, Lehigh Univ., Bethlehem, PA, 1991.

\bibitem{Kollar2} J. Kollar, {\em Singularities of Pairs}, Algebraic Geometry, Santa Cruz (1995), Proc. Symp. Pure Math. 62 (1997), Amer. Math. Soc., Providence, RI, pp. 221-287. 

\bibitem{Laz} R. Lazarsfeld, {\em Positivity in Algebraic Geometry. II.} Positivity for vector bundles and multiplier ideals,  Ergebnisse der Mathematik und ihrer Grenzgebiete. 3. Folge. A Series of Modern Surveys in Mathematics, Springer-Verlag, Berlin (2004).

\bibitem{Mor} S. Mori, {\em On $3$-dimensional terminal singularities}, Nagoya Math. J. 98 (1985), pp. 43-66.

 \bibitem{MorMuk} S. Mori and S. Mukai, {\em Classification of Fano threefolds with $B_2\geq 2$,} Manuscripta Math. 36 (2) (1981/82), no.2, pp. 147-162. 
  
  \bibitem{YPG} M. Reid, {\em Young Person's Guide to Canonical Singularities,} Proc. Sympos. Pure Math. 46, Providence, R.I.: Amer. Math. Soc., pp. 345-414.
 
 \bibitem{Sho} V.V. Shokurov, {\em Smoothness of a general anticanonical divisor on a Fano variety,} Izv. Akad. Nauk SSSR Ser. Mat 43 (1979), no. 2, pp. 430-441.
  
  \end{thebibliography}
\end{document}